\documentclass[11pt,a4paper]{article}
\usepackage[utf8]{inputenc}
\usepackage{amsmath, amsthm}
\usepackage{amsfonts}
\usepackage{amssymb}
\usepackage{graphicx}
\usepackage[hidelinks]{hyperref}
\usepackage{lmodern,microtype} 
\usepackage[T1]{fontenc} 
\usepackage[left=2.5cm,right=2.5cm,top=2.5cm,bottom=2.5cm]{geometry}
\usepackage{mathrsfs}
\usepackage{mathtools}
\usepackage{xcolor}
\usepackage{enumitem}
\usepackage{comment}
\usepackage{xfrac}

\usepackage{titlesec} 

\titleformat{\section}[hang]{\centering \large\sc}{\thesection}{1em}{}[]

\titleformat{\subsection}[hang]{ \bfseries}{\thesubsection}{1em}{}[]

\titleformat{\subsubsection}[runin]{\bfseries}{}{0pt}{}[.]

\newtheorem{theorem}{Theorem}
\newtheorem{lem}{Lemma}
\newtheorem{prop}{Proposition}
\newtheorem*{remark}{Remark}
\newtheorem*{defin}{Definition}

\usepackage{csquotes}
\usepackage[
url=false,
backend=biber,
style=numeric-comp,
giveninits=true,
doi=true,
backref=false,
date=year,
maxbibnames=99,
eprint=false
]{biblatex}

\bibliography{biblio}

\usepackage{authblk}

\title{Sharp lifespan results for plane-symmetric fluids on decelerated spacetimes}

\author{Maximilian Ofner\thanks{Department of Pure Mathematics and Mathematical Statistics, University of Cambridge, Wilberforce Road, CB3 0WB Cambridge, United Kingdom. \emph{Correspondence: mo578@cam.ac.uk}}\hspace{0.4em} and   Todd Oliynyk\thanks{School of Mathematics, Monash University, 9 Rainforest Walk, VIC 3800 Melbourne, Australia.}}

\date{}

\begin{document}

    \maketitle

    \begin{abstract}
        In this article, we analyze plane-symmetric solutions to the relativistic Euler equations on spatially flat spacetimes with decelerated expansion. We consider a linear equation of state $p(\rho)=K\rho$ and power law inflation $a(t)=t^{\alpha}$, where $0<K<\frac{1}{3}$ and $0\leq\alpha<1$.  Previous results imply nonlinear stability of the homogeneous and isotropic solution above an expansion threshold $\alpha_\text{crit}=\frac{2}{3(1-K)}$. We prove that at and below this critical expansion rate there exist arbitrarily small data that exhibit finite-time singularity formation of the shock type, i.e., gradient blowup while the solution stays bounded. For this type of data, we establish lifespan estimates that are polynomial below and exponential at the critical threshold. In addition, we prove a theorem based on energy estimates that provides lower bounds on the lifespan of plane-symmetric solutions generated from small initial data and generalizes to solutions without symmetry in $3+1$-dimensions.
    \end{abstract}
    
    \section{Introduction}\label{sec:Introduction}

    Spatially homogeneous and isotropic spacetimes that expand towards the future form the foundation of modern cosmology. The observed large-scale properties of our Universe are well described by such spacetimes, while deviations from exact homogeneity and isotropy are treated as small perturbations. Understanding the global and asymptotic properties of these spacetimes, together with the matter they contain, is therefore central to developing a rigorous mathematical foundation for cosmology.
    
    Relativistic perfect fluids constitute one of the most important classes of matter models in cosmology. Their dynamics are governed by the \textit{relativistic Euler equations}:
    \begin{equation}\label{eq:releuler}
        \begin{aligned}
            \nabla_\mu T^{\mu\nu}&=0,\\
            T^{\mu\nu}&=(p+\rho)u^{\mu}u^{\nu}+pg^{\mu\nu},\\
            g(u,u)&=-1,
        \end{aligned}
    \end{equation}
    where $g$ is the spacetime metric, $\nabla$ is the Levi-Civita connection of $g$, $p$ is the fluid pressure, $\rho$ is the proper energy density of the fluid and $u$ is the fluid four-velocity. To close this system, we assume, as is standard in cosmology, a 
    linear barotropic equation of state of the form
    \begin{equation} \label{eos}
        p(\rho)=K\rho. 
    \end{equation}
    The parameter $K$ represents the square of the fluid sound speed and, for physical reasons, is typically restricted to the interval $[0,1]$. Fluids with $K=0$, $K=\frac{1}{3}$, and $K=1$ are referred to as \textit{dust}, \textit{radiation fluids}, and \textit{stiff fluids}, respectively. 

    In cosmology, relativistic fluids are coupled to the gravitational field through Einstein's equations. However, due to the complexity of the resulting coupled system, important insights into the behaviour of relativistic fluids in cosmological settings can be gained by first considering their evolution on a \textit{prescribed} cosmological spacetime. This allows the fluid dynamics to be studied in isolation from the complications arising from the backreaction of the fluid on the spacetime geometry. This is the perspective we take in this article. 

    In the following, we restrict our attention to spatially homogeneous spacetimes of the form 
    \begin{equation}\label{eq:spacetime}
        \left(M=[t_{0},\infty)\times \mathbb{T}^{3},g=-dt^2+a(t)^2\delta_{ij}dx^{i}dx^{j}\right).
    \end{equation} 
    A distinguishing feature of these spacetimes is that their spatial sections are flat and compact. The function $a(t)$ appearing the metric $g$ is known as the \textit{scale factor} and it satisfies $a(t)>0$ and $a'(t)>0$ for all $t\in [t_0,\infty)$. These spacetimes can be further classified according to whether the expansion is \textit{accelerated}, \textit{linear}, or \textit{decelerated}, corresponding, respectively, to $a''(t)>0$, $a''(t)=0$, or $a''(t)<0$. 
    
    The future stability and asymptotic behavior of solutions to the relativistic Euler equations on spacetimes undergoing \textit{decelerated power-law expansion} were first investigated in \cite{fajman2025cqg,fajman2025arxiv} for the sub-radiative equations of state with 
    \begin{equation} \label{eq:sub-rad} 
    K\in \left(0,\frac{1}{3}\right).
    \end{equation}
    The spacetimes considered there were of the form \eqref{eq:spacetime}, with scale factor
    \begin{equation}\label{eq:pow-law}
    a(t)=t^{\alpha}, \qquad \alpha\in(0,1).
    \end{equation}
    The boundary cases of dust and radiation fluids on decelerating backgrounds were studied earlier in \cite{saisai2020,speck2013}.

    The primary innovation of the works \cite{fajman2025cqg,fajman2025arxiv} was the identification of the curve
    \begin{equation}\label{eq:criteq}
    K=1-\frac{2}{3\alpha}
    \end{equation}
    that separates the $(\alpha,K)$ region $(0,1)\times (0,\frac{1}{3})$ into stable and unstable regions. In the \textit{stable region}, defined by
    \begin{equation*}
    K<1-\frac{2}{3\alpha},
    \end{equation*}
    it was rigorously established that \textit{orthogonal}\footnote{These are homogeneous solutions with vanishing spatial velocity.} solutions to the relativistic Euler equations, see \eqref{eq:background}, are nonlinearly stable to the future. In the complementary region, consisting of the critical curve \eqref{eq:criteq} together with
    \begin{equation*}
    K> 1-\frac{2}{3\alpha},
    \end{equation*}
    which we collectively refer to as the \textit{unstable region},
    a detailed numerical study of solutions to the $\mathbb{T}^2$-symmetric relativistic Euler equations was carried out. The results provided compelling evidence that, in this region, small $\mathbb{T}^2$-symmetric perturbations of spatially homogeneous and isotropic solutions to the relativistic Euler equations develop singularities in finite time. Moreover, these numerical studies indicated the following dependence of the singularity formation time $t_{\textrm{sng}}$ on the size of the initial perturbation $\epsilon$:
    \begin{equation} \label{t-sing}
    t_{\textrm{sng}}\sim\begin{cases} \epsilon^{\frac{(3K+2)(1-K)}{2-3\alpha(1-K)}} & \text{if $ K> 1-\frac{2}{3\alpha}$} \\
    e^{\frac{e^b}{\epsilon}}& \text{if $ K=1-\frac{2}{3\alpha}$}\end{cases}.
    \end{equation}
    For the boundary case $K=0$, corresponding to dust, singularity formation was established in \cite{saisai2020} for $0<\alpha\leq \frac{1}{2}$, while future stability was established in \cite{speck2013} for $\frac{1}{2}<\alpha<1$. For the other boundary case, $K=\frac{1}{3}$, corresponding to radiation, \cite{speck2013} established singularity formation for all $\alpha \in (0,1)$. Alternative proofs for singularity formation in radiation and dust were given in \cite{fajman2025arxiv}.

    The main result of this article is a rigorous proof of the singularity formation observed numerically in \cite{fajman2025cqg} for parameter values $(\alpha,K)$ in the unstable region. The precise statement of our singularity formation result is presented below in Theorem \ref{blowup}. Together the results of this article and those of \cite{saisai2020,speck2013,fajman2025arxiv} provide a comprehensive picture of the future behavior of small perturbations of orthogonal solutions to the relativistic Euler equations over the parameter values $(\alpha,K)\in (0,1)\times [0,\frac{1}{3}]$. 
    
    \subsection{Prior and related results}
    \subsubsection{Accelerated expansion}

    The study of the stability properties of inhomogeneous fluids on expanding spacetimes was initiated in \cite{brauer1994}, where the stabilizing effect of spacetime expansion on fluid evolution was first observed. In particular, the authors rigorously showed that accelerated spacetime expansion can suppress singularity formation. They also provided an example of a spacetime undergoing decelerated expansion on which dust develops a singularity in finite time.

    Building on the techniques introduced in \cite{ringstrom2008}, the stability results of \cite{brauer1994} were subsequently generalized in \cite{speck2012,rodnianski2013}, where it was shown that, for linear equations of state \eqref{eos} with $0<K<\frac{1}{3}$, FLRW solutions to the Einstein--Euler equations with a positive cosmological constant and undergoing accelerated expansion are nonlinearly stable toward the future. This stability result was later extended to the endpoint cases of dust and radiation, corresponding to $K=0$ and $K=\frac{1}{3}$, in \cite{hadzic2015} and \cite{lubbe2013}, respectively. Alternative approaches to establishing future stability were developed in \cite{oliynyk2016,liu2018ahp,liu2018, friedrich2017}, while analogous stability results for fluids with nonlinear equations of state and other related matters model were established in \cite{wei2018,gong2024,liu2021,lefloch2021,Lubbe2024}.

    The situation is markedly different for super-radiative linear equations of state, $\frac{1}{3}<K\leq 1$. For $\frac{1}{3}<K<1$, it has been shown that \textit{tilted}\footnote{These are homogeneous solutions with non-vanishing spatial velocity.} solutions of the relativistic Euler equations are nonlinearly stable to the future on spacetimes undergoing accelerated expansion \cite{marshall2023,oliynyk2021siam}. This result was subsequently generalized in \cite{fournodavlos2026,fournodavlosAHP2026}, within the more restrictive parameter range $\frac{1}{3}<K<\frac{5}{7}$, to the coupled Einstein-Euler system with a positive cosmological constant.

    In contrast, orthogonal solutions to the relativistic Euler equations in the super-radiative regime are unstable, despite the accelerated expansion of the background spacetime. For $\frac{1}{3}<K<1$, this instability was rigorously established in \cite{oliynyk2024cmp}, where it was shown that the fractional density gradient develops sharp features and becomes unbounded at future timelike infinity. This behavior was first conjectured in \cite{rendall2004} to occur for perturbations of FLRW solutions to the Einstein-Euler equations with a positive cosmological constant and is known as the \textit{Rendall instability}. Numerical solutions to the Einstein-Euler equations exhibiting this instability have been constructed in \cite{beyer2023}. At the stiff-fluid endpoint, $K=1$, a different instability occurs: it was shown in \cite{Fournodavlos2022} that irrotational perturbations of FLRW solutions to the Einstein-Euler equations with a positive cosmological constant develop singularities in finite where the fluid four-velocity fails to remain timelike.

    \subsubsection{Linear expansion} For sub-radiative linear equations of state \eqref{eos}, with $(0<K<\frac{1}{3})$, orthogonal solutions the irrotational relativistic Euler equations were shown in \cite{fajman2021cmp} to be nonlinearly stable toward the future on spacetimes undergoing linear expansion. This result complemented the earlier work \cite{speck2013}, which established finite-time shock formation for a radiation fluid $(K=\frac{1}{3})$. The stability results of \cite{fajman2021cmp} were subsequently extended in \cite{fajman2024arma,fajman2024imrn} to the coupled Einstein-Euler system, while simultaneously removing the assumption of irrotationality.  

    \subsubsection{Decelerated expansion} In addition to the articles discussed above, we point out the following recent stability results for solutions to the Euler and related equations on spacetimes undergoing decelerated expansion. We begin with the article \cite{FMOOW25b}, in which the stability results of \cite{fajman2025cqg,fajman2025arxiv} are generalized to include coupling to Newtonian gravity. Interestingly, stability is shown to hold for $(\alpha,K)\in (\frac{2}{3},1)\times(0,\frac{1}{3})$,
    which differs qualitatively from the stability region established in \cite{fajman2025cqg,fajman2025arxiv}.
    
    On spacetimes of the form \eqref{eq:spacetime} undergoing decelerated power-law expansion, the nonlinear future stability of solutions to the massless Boltzmann equation was established in \cite{strain2026}. Related stability results were obtained for expanding FLRW solutions to the spherically symmetric Einstein-massless Vlasov system in \cite{taylor2024jmp}. Finally, for FLRW solutions with compact spatial slices undergoing decelerated expansion, nonlinear future stability was established for the Einstein-scalar field and Einstein-Euler equations in \cite{bernhardt2025} and \cite{bernhardt2026}, respectively. In addition, \cite{Marshall2025} presents numerical evidence of instability of FLRW solutions featuring decelerated expansion to the Einstein-Euler equations in Gowdy symmetry.

    In spatially non-compact spacetimes of the type \eqref{eq:spacetime}, i.e., $M=[t_0,\infty)\times \mathbb{R}^3$, with decelerated expansion the dispersive effects of the wave equation have been studied recently in \cite{haghshenas2026,costa2029,Natario2023,rossetti2025,rossetti2026}. 
    
    \subsection{Main results}

    The rigorous statements of the main results of this article is described in Theorem \ref{blowup} and \ref{energyest} below. 
    
	\begin{theorem}\label{blowup}
		Let $s\geq 2$, $0 <  \alpha< \alpha_{\text{crit}}=\frac{2}{3(1-K)} $, and $ t_{0}\geq 1 $. There exists a sequence of initial data $((\mathring{R}_+,\mathring{R}_{-})_n)_{n\in \mathbb{N}} \subset C^{\infty}(S^{1})$, $ (0,0)\neq (\mathring{R}_+,\mathring{R}_{-})_n $ for all $n\in \mathbb{N}$, at $ t_{0} $ satisfying
		 \begin{equation*}
		 	\lim_{n\to \infty}\|(\mathring{R}_+,\mathring{R}_{-})_n\|_{H^{s}}=0, \qquad \delta_n \coloneqq  \|(\partial\mathring{R}_+,\partial\mathring{R}_{-})_n\|_{L^\infty} \xrightarrow[]{n\to\infty}0,
		 \end{equation*}
      such that the solutions $((R_+,R_{-})_n)_{n\in \mathbb{N}}$ of the system \eqref{eq:chareom} launched by $((\mathring{R}_+,\mathring{R}_{-})_n)_{n\in \mathbb{N}} $ have the following properties:
       \begin{enumerate}[label=(\roman*)]
           \item \label{thm1}The classical solution terminates at finite time $t_{\delta_n}$.
           \item \label{thm2}The gradient blows up, i.e.,
            \begin{equation*}
			\lim_{t\to t_{\delta_n}}\|(\partial R_+,\partial R_-)\|_{L^{\infty}}(t)=\infty.
		\end{equation*}
        \item \label{thm3}The amplitude remains bounded, i.e.,
        \begin{equation*}
			\sup_{t\in (t_0, t_{\delta_n})}\|(R_+,R_-)_n\|_{L^{\infty}}(t)<\infty.
		\end{equation*}
        \item There exists constants $c_1,c_2>0$ independent of $n$ such that the lifespan $t_{\delta_n}$ of the solution $(R_+,R_{-})_n$ satisfies
        \begin{equation*}
            c_1\delta_n^{-\frac{1}{1-\frac{3}{2}\alpha(1-K)}}\leq t_{\delta_n} \leq  c_2\delta_n^{-\frac{1}{1-\frac{3}{2}\alpha(1-K)}}.
        \end{equation*}
       \end{enumerate}
       Now restrict to $s=2$ and let $\alpha=\alpha_{\text{crit}}$. Then \ref{thm1},\ref{thm2}, and \ref{thm3} remain true, while there exist constants $c_3,c_4>0$ and $c_5,\tilde{c}_5>0$ independent of $n$ so that
       \begin{equation}\label{eq:blowupexp}
           c_3e^{\frac{c_5}{\delta_n}}\leq t_{\delta_n}\leq  c_4e^{\frac{\tilde{c}_5}{\delta_n}}.
       \end{equation}
	\end{theorem}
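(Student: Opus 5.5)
Since the system \eqref{eq:chareom} is not reproduced above, I will assume it has the generic form of a plane-symmetric Euler system written in Riemann invariants on $a(t)=t^\alpha$:
\begin{equation*}
\partial_t R_\pm \pm \lambda(t,R_+,R_-)\,\partial R_\pm = -\frac{\beta}{t}\,F_\pm(R_+,R_-),
\end{equation*}
with characteristic speeds $\lambda\sim t^{-\alpha}(\sqrt{K}+O(R))$, damping rate $\beta=\frac{\alpha(1-3K)}{\ldots}$, and $F_\pm$ vanishing at $(0,0)$. The plan is a Lax-type argument along characteristics, written in a rescaled time variable in which the problem becomes one of Burgers/Riccati type with time-dependent coefficients.

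\textbf{Step 1 (rescaling and Riccati equations).} Introduce $\tau=\int_{t_0}^t s^{-\alpha}\,ds\sim t^{1-\alpha}$, so that the characteristic speeds become $O(1)$. Differentiate the equations in $x$ and weight the result as in John/Lax. Concretely, set $w_\pm=e^{h_\pm(R)}\partial R_\pm$, where $h_\pm$ is chosen to cancel the coupling term $\partial R_\mp\,\partial R_\pm$. Along the $\pm$-characteristics $X_\pm$ this should give
\begin{equation*}
\frac{d}{dt}w_\pm = -\frac{\gamma}{t}\,w_\pm - c(R)\,t^{-\alpha}w_\pm^2 + (\text{lower order}),
\end{equation*}
where $c(0)\neq 0$ is the genuine-nonlinearity coefficient; it is nonzero for $0<K<\tfrac13$. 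The linear damping rate $\gamma$ should be $\gamma=\frac{3\alpha(1-K)}{2}-\alpha$. This is the value for which $y=t^{\gamma}w$ satisfies
\[
\dot y=-c\,t^{-\alpha-\gamma}y^2=-c\,t^{-\frac32\alpha(1-K)}y^2 .
\]
The exponent in the lifespan bound then follows directly. Integrating gives
\[
\frac1{y(t)}=\frac1{y_0}+c\int_{t_0}^t s^{-\frac32\alpha(1-K)}\,ds .
\]
The integral diverges exactly when $\frac32\alpha(1-K)\le1$, i.e.\ $\alpha\le\alpha_{\mathrm{crit}}$. It grows like $t^{1-\frac32\alpha(1-K)}$ in the subcritical case and like $\log t$ at $\alpha=\alpha_{\mathrm{crit}}$. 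Blow-up therefore occurs when this integral reaches about $\delta^{-1}$, which gives $t\sim\delta^{-1/(1-\frac32\alpha(1-K))}$ and $t\sim e^{c/\delta}$ respectively.

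\textbf{Step 2 (bootstrap for the amplitude).} Choose data of the form $\mathring R_\pm=\epsilon_n\phi_\pm$ with $\phi$ smooth and $\phi'$ of the sign that focuses, e.g.\ $\phi_-=0$ and $\min\phi_+'<0$ attained at a point. I would then prove a bootstrap on $[t_0,t_{\delta})$ in three parts:
\begin{itemize}
\item[(a)] $|R_\pm|\lesssim\epsilon$, using the maximum principle along characteristics together with the damping sign of $F_\pm$;
\item[(b)] $w_\pm$ stays comparable to the solution of the model Riccati ODE up to $1+O(\epsilon)$ errors;
\item[(c)] the error terms, namely $t^{-1}R\,w$ and the interaction with the other family, are integrable against the Riccati dynamics.
\end{itemize}
Bounding $\int t^{-1}|R|\,dt$ requires decay of $R$. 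Along characteristics I expect $R\sim\epsilon\,t^{-\beta}$, possibly with a log loss, and the sizes must be tracked so that the accumulated error is $O(\epsilon\log t)$. In the critical case $\log t\sim 1/\delta$, so this error becomes $O(\epsilon/\delta)$. The data must therefore be chosen with $\epsilon/\delta$ bounded. This explains the need for separate constants $c_5\neq\tilde c_5$, and probably also the restriction to $s=2$, where $\|R\|_{L^\infty}\lesssim\|R\|_{H^1}$ and the derivative scale $\delta$ are tied together.

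\textbf{Step 3 (conclusions).} The upper lifespan bound comes from applying the Riccati comparison at the characteristic starting at the argmin of $\phi_+'$. The lower bound comes from applying the same comparison at the maximum of $|w|$, giving a super-solution estimate. The bootstrap gives (iii) and hence (i): local well-posedness with a continuation criterion in $\|\partial R\|_{L^\infty}$ shows that the classical solution terminates exactly when $w$ blows up. The smallness of $H^s$ norms is immediate from scaling, provided $\phi$ is chosen so that $\|\phi\|_{H^s}$ and $\|\phi'\|_{L^\infty}$ are both controlled by the same small parameter.

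\textbf{Main obstacle.} I expect the hardest part to be controlling the cross-family and lower-order terms uniformly up to the very long times $e^{c/\delta}$ in the critical case. There the linear dynamics are marginal, so any error of size $t^{-1}\epsilon$ accumulates logarithmically and competes with the Riccati term. My first attempt would be to exploit the fact that the $-$ family crosses the $+$ characteristic in rescaled time $O(1)$ with speed $O(1)$. This should make the interaction integral $\int t^{-1}|\partial R_-|\,dt$ along $X_+$ bounded by $\epsilon$ times a constant rather than by $\epsilon\log t$, which is the decoupling mechanism of John's method.
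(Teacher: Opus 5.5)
Your architecture is the paper's: a Riccati equation for a weighted $\partial_xR_+$ along $R_+$-characteristics (your $e^{h_+}$ is the paper's $\tau$), damping rate $\frac32\alpha(1-K)-\alpha=\frac{\alpha}{2}(1-3K)$, the invariant-region bound $|R_\pm|\le\epsilon$, and ODE comparison (Lemmas \ref{lem:blowup} and \ref{lem:riccatiexist}) for the two lifespan bounds. Two points in Step 2 and in your obstacle paragraph need correcting.

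\emph{The interaction term.} $t^{-1}\partial_2S_+\,\partial_xR_-$ cannot be bounded by $\epsilon$ times a constant through a crossing argument on $\int t^{-1}|\partial_xR_-|\,dt$. On $S^1$ the rescaled time $\int t^{-\alpha}dt$ is unbounded. Hence $\varrho_+$ crosses the $R_-$-profile more and more often as $\delta\to0$, and the two families never separate. Moreover, in your variable $y$ this term carries the weight $t^{\frac{\alpha}{2}(1-3K)}$, which your integral omits. What works is the signed identity $\partial_xR_-=\frac{t^{\alpha}}{\lambda_+-\lambda_-}\left(D_+R_--t^{-1}S_-\right)$. The paper implements it by subtracting $\xi(R_+,R_-,t)$ with $\partial_2\xi=t^{\alpha-1}\frac{\tau\,\partial_2S_+}{\lambda_+-\lambda_-}$. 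This removes $\partial_xR_-$ exactly and leaves an inhomogeneous term $|\tilde A_0|\lesssim\epsilon\,t^{-2+\alpha}$.

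\emph{The subcritical case needs no decay of $R$.} The $O(\epsilon)t^{-1}$ errors in $\tilde A_1$ only shift exponents by $O(\epsilon)$, and $\delta^{O(\epsilon)}\simeq1$ when $\epsilon\simeq\delta$. Also $K_0\lesssim\epsilon\int t^{\frac32\alpha(1-K)-2+O(\epsilon)}dt=O(\epsilon)$. So fixed-profile data with $\delta=\bar C\epsilon$ work there.

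The genuine gap is the critical case. There $e^{-K_1}|\tilde A_0|\lesssim\epsilon\,t^{-1+O(\epsilon)}$, so the only available bound is $K_0(t_*)\lesssim\epsilon\log t_*\simeq\epsilon/\delta$.

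This is an \emph{additive} error. It must be compared with the initial value $\delta$ of the Riccati variable, not with $1$: Lemma \ref{lem:blowup} needs $\delta>K_0(t_*)$, and Lemma \ref{lem:riccatiexist} needs $\delta<K_2(t_*)^{-1}-K_0(t_*)$. Your criterion ``$\epsilon/\delta$ bounded'' treats it as a relative error, which is correct only for the $t^{-1}R\,w$ term.

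With your data $\epsilon_n\phi_\pm$ one has $\epsilon\simeq\delta$. Then $K_0(t_*)$ is of order one, far larger than $\delta$, and neither the upper nor the lower lifespan bound follows. Pointwise decay of the solution along characteristics would rescue this, but your proposal only conjectures it. The invariant-region argument gives nothing beyond $|R_\pm|\le\epsilon$.

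The paper instead imposes $\epsilon\lesssim\delta^2$, i.e.\ $\|\mathring R_+\|_{L^\infty}\simeq\|\partial\mathring R_+\|_{L^\infty}^2$. It realizes this with the compressed profiles $\epsilon f\left((x-x_0)/(\beta\sqrt{\epsilon})\right)$, $\beta$ a small width constant. Then $\delta=\beta^{-1}\sqrt\epsilon$ and $\epsilon\log t_*\lesssim\beta^2\delta$, which is small relative to $\delta$, and the $e^{c/\delta}$ bounds close.

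This compression is the actual reason for the restriction $s=2$: the $\dot H^2$ norm of these data is $\simeq\epsilon^{1/4}\to0$, while the $\dot H^3$ norm is $\simeq\epsilon^{-1/4}\to\infty$. It has nothing to do with the embedding $H^1\subset L^\infty$ you suggest.
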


    \begin{remark}[Blowup of the physical variables]
        The data constructed in the proof of Theorem \ref{blowup} feature blowup in the derivative $\partial_x R_+$, while $R_+,R_-$, and $\partial_x R_-$ remain bounded. Inspecting the definition of the Riemann invariants \eqref{eq:riemann} as well as the transformation to physical variables \eqref{eq:Lv}, a straightforward computation implies that $\partial_x u^{x}$ as well as $\partial_x \rho$ blow up. Scenarios like this a often referred to as \emph{shock formation}. 
    \end{remark}

    \begin{remark}[Comparison with numerical data]
        Note that the lifespan results in Theorem \ref{blowup} are close to the ones predicted in \eqref{t-sing} taken from \cite{fajman2025cqg}, as the numerator in the exponent in \eqref{t-sing} is approximately $2$ for $0<K<\frac{1}{3}$. We attribute this difference to the numerical fitting process. 
    \end{remark}
    
    \begin{theorem}\label{energyest}
    Suppose that $s\geq 2$ and $\alpha < \alpha_\text{crit}$. Then there exist constants $\epsilon_{0}, c_{0}, c_{1}>0$ such that if $\|U_{0}\|_{H^{s}}\leq \epsilon$ for $\epsilon\leq \epsilon_{0}$ the unique solution $U$ with initial data $U(t_{0},\cdot)=U_{0}$ can be extended at least until time 
    \begin{equation}\label{eq:t1bound}
        t_{1}=c_{1}\epsilon^{-\frac{1}{1-\frac{3}{2}\alpha (1-K)}}.
    \end{equation}
    Furthermore, it satisfies the bound $\sup_{t_{0}\leq t<t_{1}} \|U(t,\cdot)\|_{\tilde{H}^{s}}<c_{0}\epsilon$. If $\alpha=\alpha_\text{crit}$ instead, there exist constants $c_2,c_3>0$ such that the solution can be extended to 
    \begin{equation}\label{eq:t1boundexp}
        t_{1}=c_{2}e^{\frac{c_3}{\epsilon}}.
    \end{equation}
    \end{theorem}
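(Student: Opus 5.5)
We argue by an energy estimate for the symmetric hyperbolic form of the Euler system in the unknown $U$, with time weights chosen so that the linear damping from the expansion is built into the energy. The proof is a continuation (bootstrap) argument.

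\textbf{Step 1: the weighted equation.} We write the system as
\begin{equation*}
A^0(U)\partial_t U + \frac{1}{a(t)}A^i(U)\partial_i U = \frac{\dot a}{a}\,\mathcal{B}\,U + F(U),
\end{equation*}
where the terms linear in the velocity carry the coefficient $(1-3K)\dot a/a = (1-3K)\alpha/t$. Here $F$ is quadratic and contains no derivatives. The $\tilde H^s$ norm is, we assume, the $H^s$ norm of $U$ after rescaling each component by the appropriate power of $t$. In these variables the only obstruction to global control is the transport term $a^{-1}A^i\partial_i$, whose coefficient is $t^{-\alpha}$.

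\textbf{Step 2: the energy inequality.} Set $E_s(t)=\sum_{|\beta|\le s}\langle \partial^\beta U, A^0(U)\partial^\beta U\rangle$. Differentiate $\partial^\beta$ of the equation and integrate by parts; the principal terms cancel by symmetry. The remaining terms are controlled using the commutator and Moser estimates, Sobolev embedding $H^2\hookrightarrow W^{1,\infty}$ in one dimension (or $H^s$ with $s$ large enough in $3+1$), and the sign of the linear damping. After absorbing the favorable damping we aim for
\begin{equation*}
\frac{d}{dt}E_s \le C\, t^{\frac{3}{2}\alpha(1-K)-1}\,\|U\|_{\tilde H^s}\,E_s .
\end{equation*}
The exponent is obtained as follows. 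The natural decay rates of the fluid variables under the background (density weight $t^{-3\alpha(1+K)}$, velocity decaying like $t^{-\alpha(1-3K)}$) combine with the $t^{-\alpha}$ transport coefficient, and after undoing the weights the net growth factor in the nonlinearity is $t^{-\alpha}\cdot t^{\alpha(1-3K)/2\cdots}$. The exponent should be calibrated exactly so that
\begin{equation*}
\int_{t_0}^{t}s^{\frac32\alpha(1-K)-1}\,ds \sim t^{1-\frac32\alpha(1-K)}
\end{equation*}
in the subcritical case and $\sim\log t$ in the critical case. This calibration matches the lifespans claimed in Theorem \ref{energyest} and the sharpness in Theorem \ref{blowup}, so I would choose the weights by requiring exactly this.

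\textbf{Step 3: bootstrap.} Assume $\|U\|_{\tilde H^s}\le 2C_0\epsilon$ on $[t_0,T)$. Grönwall then gives
\begin{equation*}
E_s(t)\le E_s(t_0)\exp\bigl(C\epsilon\, t^{1-\frac32\alpha(1-K)}\bigr).
\end{equation*}
As long as $C\epsilon\, t^{1-\frac32\alpha(1-K)}\le \log 2$, this improves the bootstrap bound to $\sqrt2\,C_0\epsilon$. That condition is exactly $t\le c_1\epsilon^{-1/(1-\frac32\alpha(1-K))}$. Local well-posedness together with the continuation criterion (the $W^{1,\infty}$ norm stays bounded) extends the solution to $t_1$.

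In the critical case the integral equals $\log(t/t_0)$, so the condition becomes $C\epsilon\log t\le\log 2$, i.e.\ $t\le c_2 e^{c_3/\epsilon}$. The restriction $s=2$ in the plane-symmetric statement is harmless here, since Sobolev embedding in one dimension suffices.

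\textbf{Main obstacle.} The main obstacle is Step 2: constructing the weighted energy so that every term linear in $U$ either cancels or has a good sign. In particular, terms like $(\dot a/a)\,u\cdot\partial\rho$-type couplings must not produce growth. In addition, all quadratic terms must carry precisely the time factor $t^{\frac32\alpha(1-K)-1}$ and nothing worse; lower-order source terms could otherwise contribute a non-integrable $1/t$ with an $O(1)$ coefficient, which would destroy the $\epsilon$-dependence. I would first test the choice of weights on the linearized equations, diagonalized through the Riemann invariants $R_\pm$, to read off the exact decay rates, and then use those rates as the weights for the nonlinear energy.
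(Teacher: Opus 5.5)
\textbf{There is a genuine gap in Step 2, which is the only nontrivial step.}

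\emph{The target inequality has the wrong time weight.} You claim $\int_{t_0}^t s^{\frac32\alpha(1-K)-1}\,ds\sim t^{1-\frac32\alpha(1-K)}$, but this integral behaves like $t^{\frac32\alpha(1-K)}$. At $\alpha=\alpha_{\text{crit}}$ the integrand is identically $1$, so the integral grows like $t$, not $\log t$. Taken literally, your Step 3 would give a lifespan $\epsilon^{-1/(\frac32\alpha(1-K))}$, and only $\sim\epsilon^{-1}$ at criticality. The correct coefficient of the cubic term is $t^{-\frac32\alpha(1-K)}=t^{-\alpha}\cdot t^{-\frac12\alpha(1-3K)}$, the transport factor times the decay rate of $\|U\|_{\tilde H^s}$.

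\emph{The weights cannot be chosen to fit the answer.} Choosing them ``by requiring'' the calibration is circular; they are forced by the linear structure of \eqref{system}, and the energy you propose does not produce them. For $E_s=\sum_{|\beta|\le s}\langle\partial^\beta U,A^0\partial^\beta U\rangle$ the linearized identity is $\frac{d}{dt}E_s=-2\alpha(1-3K)t^{-1}\|v\|_{H^s}^2$. The damping acts only on $v$, and the mean of $L$ is not damped at all. This leaves two bad options. If $E_s$ is left unweighted, it does not decay, the cubic coefficient is $t^{-\alpha}$, and you get only a lifespan $\epsilon^{-1/(1-\alpha)}$, strictly shorter since $K<\frac13$, and polynomial at criticality. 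If you multiply by $t^{\alpha(1-3K)}$, you create exactly the indefinite, non-integrable $t^{-1}$ term you are worried about. Rescaling $L$ and $v$ by different powers of $t$ does not help, because it breaks the symmetrization and the principal terms no longer cancel.

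\textbf{The missing idea.} The damping of $v$ is transferred to $\partial L$ on average through the wave coupling. Its coefficient $t^{-\alpha}$ dominates the damping rate $t^{-1}$ because $\alpha<1$, so the two components equipartition. To exploit this you must:
\begin{itemize}
\item discard the undamped mean of $L$, i.e.\ work with $\partial L$ and $v$ and the semi-norm $\|\cdot\|_{\tilde H^s}$;
\item correct the energy, e.g.\ by a lower-order cross term between $\partial L$ and $v$ with coefficient $\sim t^{\alpha-1}$.
\end{itemize}
This gives the whole energy the rate $\alpha(1-3K)t^{-1}$, so $\|U\|_{\tilde H^s}$ decays like $t^{-\frac12\alpha(1-3K)}$. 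That is half the rate $t^{-\alpha(1-3K)}$ your velocity heuristic suggests. The price is an integrable error:
\begin{equation*}
\frac{d}{dt}E\leq -\alpha(1-3K)t^{-1}E+Ct^{-2+\alpha}E+\left(t^{-1}+t^{-\alpha}\right)\mathcal{O}\left(E^{\frac32}\right).
\end{equation*}
This is \eqref{eq:energyid}, which the paper takes from \cite{fajman2025arxiv} (Lemma \ref{lem:energyid}). Only then does $\bar E=t^{\alpha(1-3K)}E$ satisfy $\frac{d}{dt}\bar E\lesssim t^{-2+\alpha}\bar E+t^{-\frac32\alpha(1-K)}\bar E^{\frac32}$.

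The linear term $t^{-2+\alpha}\bar E$ is absent from your inequality. It is harmless only because it is integrable, and it forces the bootstrap constant to exceed $\exp\left(C\int_{t_0}^\infty t^{-2+\alpha}\,dt\right)$, which is the role of $A$ in the paper. So your ``$2C_0$'' must be enlarged accordingly.

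With this inequality in hand, your bootstrap and Gr\"onwall argument in Step 3 is a valid substitute for the paper's comparison argument for $\bar E^{-1/2}$. It yields $t_1\sim\epsilon^{-1/(1-\frac32\alpha(1-K))}$, and $t_1\sim e^{c/\epsilon}$ at $\alpha=\alpha_{\text{crit}}$.
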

    \begin{remark}
        Note that the lower bounds in \eqref{eq:t1boundexp} in \eqref{eq:blowupexp} do not agree. The reason stems from the fact that for the sequence of data constructed in the proof of Theorem \ref{blowup}, the $H^2$-norm does not agree with its $W^{1,\infty}$-norm. If the former tends to zero as $\delta$, the latter does so as $\delta^{\frac{1}{2}}$. 
    \end{remark}
    
    \subsection{Overview of this article}

	\begin{itemize}
        \item Section \ref{sec:Introduction} provides a short introduction to the contents of this articles and an overview of recent developments. In addition, it rigorously states the main results given by Theorem \ref{blowup} and \ref{energyest}.
        \item Section \ref{sec:not} introduces the necessary notations and definitions. 
        \item Section \ref{sec:plane} introduces the relativistic Euler equations in plane-symmetry. This system admits a homogeneous and isotropic solution, the density of which decays in time. In expansion-normalized fluid variables $(L,v)$ the canonical solution becomes trivial, i.e., $(L,v)=(0,0)$. We are ultimately interested in the long-time properties of data close to this trivial solution. To study these solutions, we introduce \emph{Riemann invariants} $R_\pm$, which allow us to treat parts of these equations as an ODE. 
        \item Section \ref{sec:riccati} is dedicated to deriving an ODE-like evolution equation for the spatial derivative $\partial_xR_+$. In particular, we derive an equation akin to 
        \begin{equation*}
            \frac{d}{dt}(\partial_xR_+)(t,\varrho_+(t))\sim A_2 (\partial_xR_+)^2(t,\varrho_+(t))+A_1(\partial_xR_+)(t,\varrho_+(t))+A_0,
        \end{equation*}
        where $\varrho_+$ is a characteristic of $R_+$. The coefficients $A_{0,1,2}$ depend on the Riemann invariants $R_\pm$ and time but not their derivatives. The quadratic nature of this ODE, which is often referred to as \emph{Riccati equation}, can force blowup of its solution given suitable control on $A_{0,1,2}$. 
        \item Section \ref{sec:est} proves a global bound of the Riemann invariants. Supposing that the image of the data lies within a set
        \begin{equation*}
            D_{\mu}= \{(x,y)\in \mathbb{R}^{2}\vert \max(|x|,|y|)< \mu\},
        \end{equation*}
        we prove that the classical solution cannot leave $D_\mu$ as long as it exists. This is then used to establish bounds on key quantities that appear in $A_{0,1,2}$. 
        \item Section \ref{sec:thm1} is dedicated to the proof of Theorem \ref{blowup}. The previously established control of the Riemann invariants and, subsequently, the coefficients $A_{0,1,2}$ allows us to give an estimate of the evolution of the derivative of one of the Riemann invariants. In the subcritical case, $\alpha<\alpha_{\text{crit}}$, constructing a sequence of initial data is straightforward, as data $\mathring{R}_+$ satisfying
        \begin{equation*}
            \|\mathring{R}_+\|_{L^{\infty}} \sim \|\partial_x\mathring{R}_+\|_{L^{\infty}}
        \end{equation*}
        are sufficient to generate blowup. Tracking the evolution of the spatial derivative along the characteristic of maximal initial compression yields an upper bound on the lifespan. A lower bound can be established in a similar way. 

        In the critical case, we require additional compression of the initial data, such that
        \begin{equation*}
            \|\mathring{R}_+\|_{L^{\infty}} \sim \|\partial_x\mathring{R}_+\|_{L^{\infty}}^2.
        \end{equation*}
        
        As a consequence, the family of constructed data only tends to $0$ in the low regularity class $H^{2}$. 
        \item Lastly, in section \ref{sec:thm2}, we prove a lower bound for the lifespan of arbitrary small-data solutions based on energy methods. Although this is not sharp for the sequence constructed at the critical line, it is much more robust and generalizes to higher dimensions. 
	\end{itemize}

    \subsection{Declarations and acknowledgments}
    
    \subsubsection{Acknowledgments} M.O.~acknowledges support by the Austrian Science Fund (FWF) grant \emph{Cosmological Fluid Models Beyond the Critical Regime} 10.55776/J4982. The authors acknowledge the hospitality of the ICMS Edinburgh, where parts of the present research article were created during the program \emph{PDE perspectives in gravitational theory, September 2026}. 

    \subsubsection{Declaration of Usage of AI tools} AI tools (in particular \emph{GPT-5.6} and \emph{GPT-6} by \emph{OpenAI}) were used for checking calculations and searching pre-existing literature. Neither mathematics nor text in this article was generated by AI and the authors assume responsibility for all content. 
        
    \section{Notation and definitions}\label{sec:not}

    We assume the spacetime \eqref{eq:spacetime} to be equipped with the standard (almost) global coordinates $(t,x,y,z)$. We will denote indices for spatial degrees of freedom running in $\{1,2,3\}$ with roman letters $i,j,k, \dots $, while Greek letters $\mu,\nu,\xi, \dots$ are assumed to run in $\{0,1,2,3\}$ and are therefore spacetime indices. A spatial derivative $\partial_x f(\cdot,x)$ of a function in the spacetime variables $(t,x)$ will often simply be denoted as $\partial f$. 
    
    Furthermore, to avoid confusion, we make a strict distinction between \emph{slot-derivatives}, $\partial_1 f$ of a function $f$ in two variables and a \emph{coordinate-derivative} $\partial_x f(t,x)$. Hence, $(\partial_2f)(t,x)=\partial_x f(t,x)$. 
    
    For a definition and elementary properties of Sobolev spaces on compact manifolds, we refer to \cite{hebey1999}. 
    
    \begin{defin}[Plane-symmetry]\label{planesymmetry}
        We refer to a solution $(u^{i},\rho)$ of \eqref{eq:releuler} as \emph{$\mathbb{T}^{2}$-} or \emph{plane-symmetric}, if $$u^{y}=u^{z}=\partial_y \rho=\partial_z \rho=\partial_yu^{x}=\partial_z u^{x}=0.$$ 
    \end{defin}

    \begin{remark}
        Note that plane-symmetry propagates and, in particular, plane-symmetric data launches plane-symmetric solutions. This can easily be seen, e.g., inpecting the equations of motion as presented in \cite{fajman2025arxiv}. 
    \end{remark}

    \begin{defin}[Big-O notation]
        Let $\epsilon_{0}>0$ and $f,g\in C((0,\epsilon_{0}); \mathbb{R}_{+})$ with $\lim_{x\to0^{+}}f(x)=\lim_{x\to0^{+}}g(x)=0$  and $g>0$. Then we define
        \begin{equation*}
            f=\mathcal{O}(g) \iff \limsup_{x\to 0^{+}} \left|\frac{f(x)}{g(x)} \right|<\infty.
        \end{equation*}
        If instead $\lim_{x\to 0^{+}}f(x)=C$ with $C\in \mathbb{R}$, we define 
        \begin{equation*}
            f=C+\mathcal{O}(g) \iff |f-C|= \mathcal{O}(g).
        \end{equation*}
        We tacitly extend this concept to elementary manipulations. 
    \end{defin}

    \begin{defin}[Comparison]
        Given two functions $f,g\in C(M;\mathbb{R})$ we will often write $f<g$ meaning that for all $x\in M$, $f(x)< g(x)$. Furthermore, we will often write $a\lesssim b$ meaning there exists a constant $C>0$, such that $a\leq C b$ for all suitable arguments of $a$ and $b$ (real numbers, functions, etc.). In addition we write 
        \begin{equation*}
            a \simeq b \iff a\lesssim b \text{ and } b\lesssim a.
        \end{equation*}
    \end{defin}

    \section{The relativistic Euler equations in plane-symmetry}\label{sec:plane}

	\subsection{The background solution}
	The system \eqref{eq:releuler} admits a family of non-trivial, homogeneous, and isotropic solution, given explicitly by
    \begin{equation}\label{eq:background}
        \bar{u}^{\mu}(t,\cdot)=\delta^{\mu}_{0}, \qquad \bar{\rho}(t,\cdot)=\rho_0 \left(\frac{t}{t_0}\right)^{-3\alpha(1+K)},
    \end{equation}
    where $\rho_0\in \mathbb{R}_{+}$. In particular, this solution is plane-symmetric in the sense of Definition \ref{planesymmetry}. 

    We will, however, study the relativistic Euler equations in the fluid coordinates 
    \begin{equation}\label{eq:Lv}
        v^{i}=\frac{t^{\alpha}u^{i}}{\sqrt{1+t^{2\alpha}\delta_{k\ell}u^{k}u^{\ell}}}, \qquad L=\log\left(\left(\frac{t}{t_0}\right)^{3\alpha(1+K)}\frac{\rho}{\rho_0}\right).
    \end{equation}
    Note that, in $(L,v)$-coordinates the solution in \eqref{eq:background} becomes trivial, i.e.,
    \begin{equation*}
        (\bar{L},\bar{v})(t)\coloneqq \left(L(t,\bar{\rho}(t)),v(t,\bar{u}(t)))\right)=(0,0).
    \end{equation*}
    \subsection{The equations of motion in symmetry}
    
    Using the fluid coordinates $U=(L,v)$ introduced in \eqref{eq:Lv}, we can write the plane-symmetric relativistic Euler equations as the quasilinear first-order system
	\begin{equation}\label{system}
		U_{t}(t,x)+t^{-\alpha}A(U(t,x))U_{x}(t,x)=t^{-1}H(U(t,x),t,x),
	\end{equation}
	where $ A $ and $ H $ are given by
	\begin{equation}\label{AH}
		A=\begin{pmatrix}
			\frac{1-K}{1-Kv^{2}}v & \frac{1+K}{1-Kv^{2}} \\ \frac{K}{1+K}\frac{(1-v^{2})^{2}}{1-Kv^{2}} & \frac{1-K}{1-Kv^{2}}v
		\end{pmatrix}, \quad H=\begin{pmatrix}
			\frac{\alpha(1+K)(1-3K)}{1-Kv^{2}}v^{2} \\ -\alpha(1-3K)v+\frac{\alpha (1-K)(1-3K)}{1-Kv^{2}}v^{3}
	\end{pmatrix}.
	\end{equation}
	While tedious to do by hand, deriving \eqref{AH} is a straightforward application of the chain rule. For additional discussions of this, we refer to, e.g., \cite{fajman2025arxiv,fajman2025cqg,speck2013}. 

    The following two propositions are slight variations of textbook results. For proofs of these results, we refer to \cite{serre2007}.
    \begin{prop}[Local well-posedness]\label{localwellposed}
        		Let $s >\frac{1}{2}+1$, $t_{0}>0$, and $\delta>0$. Then, there exists a $T_{\delta}>t_{0}$, such that for each $\mathring{L}, \mathring{v} \in H^{s}(S^{1})$ satisfying
		\begin{align*}
			\|\mathring{L}\|_{H^{s}}+\|\mathring{v}\|_{H^{s}}\leq \delta,
		\end{align*}
		there exist unique functions $(L,v):[t_{0},T_{\delta})\to {H}^{s}(S^{1})$, such that
		\begin{align*}
			L&\in C^{0}([t_{0},T_{\delta}),H^{s}(S^{1}))\cap C^{1}([t_{0},T_{\delta}),H^{s-1}(S^{1})),\\
			v&\in C^{0}([t_{0},T_{\delta}),H^{s}(S^{1}))\cap C^{1}([t_{0},T_{\delta}),H^{s-1}(S^{1})),
		\end{align*}
		with 
		\begin{equation*}
			L(t_{0})=\mathring{L}, \qquad v(t_{0})=\mathring{v},
		\end{equation*}
		and so that $(L(\cdot),v(\cdot))$ solve \eqref{system} on $[t_{0},T_{\delta})\times S^{1}$. Furthermore, the solution map that maps $(\mathring{L}, \mathring{v}) \mapsto (L,v)$ is continuous. 
    \end{prop}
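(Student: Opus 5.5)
The plan is to reduce Proposition~\ref{localwellposed} to the standard local theory for symmetric hyperbolic quasilinear systems, as in \cite{serre2007}. The first step is to symmetrize \eqref{system}. Near $U=0$ the matrix $A(U)$ has real, distinct eigenvalues, namely $\pm\sqrt{K}$ at $v=0$ and $\frac{(1-K)v\pm\sqrt{K}(1-v^2)}{1-Kv^2}$ in general. We therefore take the symmetrizer
\begin{equation*}
A^0(U)=\operatorname{diag}\Bigl(\tfrac{K}{1+K}\tfrac{(1-v^2)^2}{1-Kv^2},\ \tfrac{1+K}{1-Kv^2}\Bigr).
\end{equation*}
This matrix is smooth and positive definite for $|v|<1$, and $A^0A$ is symmetric, since the off-diagonal entries of $A^0A$ both equal $\frac{K(1-v^2)^2}{(1-Kv^2)^2}$. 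Multiplying \eqref{system} by $A^0(U)$ gives a system of the form
\begin{equation*}
A^0(U)U_t+t^{-\alpha}B(U)U_x=t^{-1}A^0(U)H(U,t,x),
\end{equation*}
with $B=A^0A$ symmetric. The coefficients are smooth in $U$ on the open set $\{|v|<1\}$ and smooth in $t$ for $t\geq t_0>0$. The source term is smooth with $H(0)=0$.

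We choose $\delta$ so small that $\|v\|_{L^\infty}\lesssim\|v\|_{H^s}\leq\delta$, via Sobolev embedding since $s>\frac32$, keeps $U$ inside a compact subset of $\{|v|<1\}$ on which $A^0$ is uniformly positive. The second step is the standard energy estimate. We use the energy $\langle \Lambda^sU,A^0\Lambda^sU\rangle_{L^2}$ with $\Lambda=(1-\partial_x^2)^{1/2}$. The symmetry of $B$ lets us integrate by parts in the principal term, and the remaining terms are controlled by the Kato--Ponce commutator estimate $\|[\Lambda^s,B(U)]\partial_xU\|_{L^2}\lesssim\|\partial_xB(U)\|_{L^\infty}\|U\|_{H^s}+\|B(U)\|_{H^s}\|\partial_x U\|_{L^\infty}$ together with Moser estimates for $B(U)$, $A^0(U)$ and $H(U)$. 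This yields
\begin{equation*}
\frac{d}{dt}E_s\leq C(\|U\|_{W^{1,\infty}},t_0)\,E_s,
\end{equation*}
and since $s>\frac32$ we have $\|U\|_{W^{1,\infty}}\lesssim\|U\|_{H^s}$. The bound therefore closes on a time interval $[t_0,T_\delta)$ that depends only on $\delta$ and $t_0$, and on that interval $v$ stays in the admissible region.

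The third step is existence, uniqueness and continuity. Existence follows by the usual iteration scheme: we solve the linearized systems with frozen coefficients $U^{(n)}$, obtain uniform $H^s$ bounds from the previous step, and show contraction in $L^2$. Interpolation then gives convergence in $H^{s'}$ for every $s'<s$, and weak-$*$ compactness gives $U\in L^\infty H^s$. Uniqueness comes from an $L^2$ energy estimate for the difference of two solutions. The time derivative $U_t\in C^0H^{s-1}$ is read off directly from the equation.

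The main obstacle is upgrading the solution to $C^0([t_0,T_\delta),H^s)$ and proving that the data-to-solution map is continuous in $H^s$, not just in $H^{s'}$. For this I would use the Bona--Smith argument: mollify the data, compare the solutions generated by the smoothed data with the solution generated by the original data, and use the $H^{s+1}$ growth of the smoothed solutions to balance the $L^2$ difference estimates. This is exactly the textbook argument in \cite{serre2007}, and the only feature of our setting that has to be checked is that the explicit time weights $t^{-\alpha}$ and $t^{-1}$ are smooth and bounded on $[t_0,T_\delta]$, which holds because $t_0>0$.
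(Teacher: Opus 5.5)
Your proposal is correct and follows essentially the paper's route. The paper gives no proof of its own and defers to the standard local theory for symmetrizable quasilinear hyperbolic systems in \cite{serre2007}. Your explicit diagonal symmetrizer $A^0(U)$ makes $A^0A$ symmetric and is positive definite for $|v|<1$, which is exactly what invoking that theory requires; the energy estimate, iteration and Bona--Smith steps are the textbook argument.

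Your restriction to small $\delta$ (or, more generally, to data with $\sup|\mathring v|$ bounded away from $1$) does not weaken the result. It is a necessary correction of the statement as written, which claims every $\delta>0$. At $|v|=1$ the matrix $A$ becomes a Jordan block, so hyperbolicity fails, and at $|v|=K^{-1/2}$ the matrix $A$ is undefined. This matches the paper's own standing smallness assumption.
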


    \begin{prop}[Continuation criterion]\label{continuation}
        Let $(L,v)\in (C^{1}([t_0,t_*); S^{1}))^{2}$ be the classical solution of \eqref{system} and suppose that this solution cannot be extended classically to $t_*$. Then 
        \begin{equation*}
            \limsup_{t\to t_*}\|(L,v)\|_{W^{1,\infty}}=\infty.
        \end{equation*}
    \end{prop}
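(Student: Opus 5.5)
The plan is the standard argument for quasilinear symmetrizable hyperbolic systems, as in \cite{serre2007}. I argue by contraposition: assuming $\limsup_{t\to t_*}\|(L,v)\|_{W^{1,\infty}}<\infty$, I show that the $H^s$-norm of the solution stays bounded on $[t_0,t_*)$. The time of existence in Proposition \ref{localwellposed} depends only on an $H^s$ bound of the data and on the bounded time interval. Restarting the local theory at times close to $t_*$ then extends the solution classically past $t_*$, which is a contradiction. Since the continuation criterion is stated for classical solutions, I first work with $H^s$ data, $s>\frac32$, for which $C^1$ regularity follows from Sobolev embedding. For general $C^1$ solutions I would use uniqueness of classical solutions together with approximation of the data.

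\textbf{Step 1: hyperbolicity and a symmetrizer.} The matrix $A(U)$ in \eqref{AH} has eigenvalues
\begin{equation*}
\lambda_\pm=\frac{(1-K)v\pm\sqrt{K}(1-v^2)}{1-Kv^2},
\end{equation*}
which are real and distinct as long as $|v|<1$. This holds for the physical variables by \eqref{eq:Lv}. On the compact set $\{|v|\le 1-\eta\}$ I take the smooth positive definite symmetrizer
\begin{equation*}
S(U)=\mathrm{diag}\Bigl(\tfrac{K}{1+K}\tfrac{(1-v^2)^2}{1-Kv^2},\ \tfrac{1+K}{1-Kv^2}\Bigr).
\end{equation*}
This choice makes $SA$ symmetric, since the off-diagonal products agree. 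A uniform bound $|v|\le 1-\eta$ on $[t_0,t_*)$ is needed so that $S$ stays uniformly positive. I expect this to be the main subtle point. The boundedness in $W^{1,\infty}$ alone does not obviously exclude $|v|\to1$, where strict hyperbolicity degenerates. I would handle it by also tracking $v$ along characteristics, in the spirit of the Riemann-invariant bounds of Section \ref{sec:est}. Alternatively, I would add the harmless requirement that $\sup|v|<1$ to the criterion. In the small-data regime used later this requirement is automatic.

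\textbf{Step 2: higher-order energy estimate.} I apply $\partial_x^k$ for $k\le s$ (for non-integer $s$, I use $\Lambda^s$) to \eqref{system}. I then pair with $S(U)\partial_x^kU$ and integrate over $S^1$. The principal term is handled by integrating by parts using the symmetry of $SA$. This produces $\|\partial_x(SA)\|_{L^\infty}$ and $\|\partial_t S\|_{L^\infty}$, both bounded by $C(\|U\|_{W^{1,\infty}})$; the time derivative is expressed through the equation. The commutator $[\partial_x^k,A(U)]\partial_xU$ and the term $\partial_x^k H(U,t)$ are controlled by the Moser and Kato--Ponce estimates. This yields
\begin{equation*}
\frac{d}{dt}E_s(t)\le C\bigl(\|U\|_{W^{1,\infty}},t_0,t_*\bigr)E_s(t),\qquad E_s\simeq\|U\|_{H^s}^2 .
\end{equation*}
The factors $t^{-\alpha}$ and $t^{-1}$ are bounded on $[t_0,t_*]$ because $t_0>0$. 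The estimate is linear in $E_s$, with all nonlinearity entering only through the $W^{1,\infty}$ norm, and this is exactly what makes the criterion possible.

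\textbf{Step 3: conclusion.} Gronwall's inequality gives $\sup_{[t_0,t_*)}\|U\|_{H^s}<\infty$. I then fix $t_1<t_*$ closer to $t_*$ than the uniform local existence time associated with this bound. Applying Proposition \ref{localwellposed} from $t_1$ extends the solution beyond $t_*$, and uniqueness guarantees that the extension agrees with $U$ on the overlap. This contradicts maximality, so $\limsup_{t\to t_*}\|(L,v)\|_{W^{1,\infty}}=\infty$.
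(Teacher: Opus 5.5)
Your proposal is correct and follows the standard route (symmetrizer, tame energy estimate, Gronwall, then restarting the local theory), which the paper does not write out but defers to \cite{serre2007}. The subtlety you flag is genuine: the textbook criterion carries the extra alternative that $U$ leaves every compact subset of the hyperbolicity domain $\{|v|<1\}$. Your first option closes it without changing the statement, since the invariant-region argument of Lemma \ref{velbounded} uses only $|v|<1$ and $0<K<\tfrac13$, not smallness; with $v=\tanh\bigl(\tfrac{R_+-R_-}{2}\bigr)$ it gives $|v|\le\tanh\mu<1$ uniformly on $[t_0,t_*)$.
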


    \begin{remark}[Lifespan]
        Propositions \ref{localwellposed} and \ref{continuation} allow us to extend a solution $S$ uniquely up to a maximal, potentially infinity time $T_\text{max}$. We refer to $T_{\text{max}}$ as the \emph{lifespan} of $S$. 
    \end{remark}
    
    \begin{remark}[Smallness assumption]
        For the rest of this manuscript, we assume $|v|$ to be sufficiently small so that all function of $v$ that make up $A$ and $H$ are smooth. Proposition \ref{localwellposed} with sufficiently small data guarantees that an interval $[t_0,t_1)$ satisfying this assumption always exists. In fact, Lemma \ref{velbounded} ensures that this condition is then satisfied during the whole classical lifespan of the solution. 
    \end{remark}
    
	\subsection{Riemann invariants} 
    
    In this section, we will derive the Riemann invariants of the system in \eqref{system}. This allows us to write the equations of motion in a diagonal form 
    \begin{equation*}
        \partial_t \begin{pmatrix}
            R_{+}\\ R_{-}
        \end{pmatrix}+t^{-\alpha}\begin{pmatrix}
            \lambda_{+}(R_+,R_{-}) & 0\\
            0 & \lambda_{-}(R_+,R_{-})
        \end{pmatrix}\partial_x\begin{pmatrix}
            R_{+}\\ R_{-}
        \end{pmatrix}=t^{-1}\begin{pmatrix}
            S_+(R_+,R_{-})\\S_-(R_+,R_{-})
        \end{pmatrix}.
    \end{equation*}
    For details about Riemann invariants, we refer to \cite{hoermander1997,alinhac1995,evans1998}.

    \begin{lem}\label{Riemann}
        Suppose that $(L,v)$ is a sufficiently small solution of \eqref{AH} and define $R_\pm$ via
        \begin{equation}\label{eq:riemann}
		      R_{\pm}=\frac{\sqrt{K}}{1+K}L \pm \frac{1}{2}\log\frac{1+v}{1-v}
	    \end{equation}
        Then the map $(L,v)\mapsto (R_+(L,v),R_-(L,v))$ is a diffeomorphism and
        \begin{equation}\label{eq:chareom}
            D_{\pm}R_{\pm}=t^{-1}S_{\pm}(R_+,R_-),
        \end{equation}
        where 
        \begin{align*}
            D_\pm &=(\partial_{t}+t^{-\alpha}\lambda_{\pm}(R_+,R_-)\partial_{x}),\\
            \lambda_{\pm}(R_+,R_-)&=\frac{v(R_+,R_-)\pm\sqrt{K}}{1\pm v(R_+,R_-)\sqrt{K}},\\
            S_\pm(R_+,R_-)&=\mp \alpha (1-3K)\frac{v(R_+,R_-)}{1\pm\sqrt{K}v(R_+,R_-)}.
        \end{align*}
    \end{lem}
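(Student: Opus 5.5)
The plan is to verify the lemma directly: first the diffeomorphism property, then the diagonal form, by computing the left eigenvectors of $A$ and checking that $dR_\pm$ are proportional to them.

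\emph{Diffeomorphism.} Set $w=\frac12\log\frac{1+v}{1-v}=\operatorname{artanh} v$, which is a diffeomorphism of $(-1,1)$ onto $\mathbb{R}$. Then
\begin{equation*}
R_\pm=\frac{\sqrt K}{1+K}L\pm w,
\end{equation*}
a linear invertible change of variables in $(L,w)$, since $\det\neq0$ for $K>0$. Composing with $(L,v)\mapsto(L,w)$ gives a global diffeomorphism with explicit inverse
\begin{equation*}
L=\frac{1+K}{2\sqrt K}(R_++R_-),\qquad v=\tanh\!\big(\tfrac12(R_+-R_-)\big).
\end{equation*}

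\emph{Eigenvalues.} The diagonal entries of $A$ are equal, $a=\frac{(1-K)v}{1-Kv^2}$. The product of the off-diagonal entries is $\frac{K(1-v^2)^2}{(1-Kv^2)^2}$. Hence the eigenvalues are
\begin{equation*}
\frac{(1-K)v\pm\sqrt K(1-v^2)}{1-Kv^2},
\end{equation*}
and factoring $1-Kv^2=(1-\sqrt K v)(1+\sqrt K v)$ and $(1-K)v\pm\sqrt K(1-v^2)=(v\pm\sqrt K)(1\mp\sqrt K v)$ gives exactly $\lambda_\pm$.

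\emph{Left eigenvectors.} The gradients are $\nabla_{(L,v)}R_\pm=\big(\frac{\sqrt K}{1+K},\pm\frac{1}{1-v^2}\big)$. I will check $\nabla R_\pm\,A=\lambda_\pm\nabla R_\pm$. The first component reads
\begin{equation*}
\frac{\sqrt K}{1+K}a\pm\frac{1}{1-v^2}\cdot\frac{K}{1+K}\frac{(1-v^2)^2}{1-Kv^2}=\frac{\sqrt K}{1+K}\Big(a\pm\frac{\sqrt K(1-v^2)}{1-Kv^2}\Big)=\frac{\sqrt K}{1+K}\lambda_\pm.
\end{equation*}
The second component follows in the same way.

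\emph{Diagonal form.} Multiplying \eqref{system} on the left by $\nabla R_\pm$ gives
\begin{equation*}
\partial_tR_\pm+t^{-\alpha}\lambda_\pm\partial_xR_\pm=t^{-1}\nabla R_\pm\cdot H.
\end{equation*}

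\emph{Source term.} This is the main computation, and it is pure algebra. We have
\begin{equation*}
\nabla R_\pm\cdot H=\frac{\sqrt K\,\alpha(1-3K)v^2}{1-Kv^2}\pm\frac{\alpha(1-3K)v}{1-v^2}\Big(-1+\frac{(1-K)v^2}{1-Kv^2}\Big).
\end{equation*}
The bracket simplifies to $-\frac{1-v^2}{1-Kv^2}$, so
\begin{equation*}
\nabla R_\pm\cdot H=\alpha(1-3K)\frac{v(\sqrt K v\mp1)}{1-Kv^2}.
\end{equation*}
Since $\sqrt K v\mp1=\mp(1\mp\sqrt K v)$, cancelling against $1-Kv^2=(1\mp\sqrt Kv)(1\pm\sqrt Kv)$ yields
\begin{equation*}
\nabla R_\pm\cdot H=\mp\alpha(1-3K)\frac{v}{1\pm\sqrt K v}=S_\pm.
\end{equation*}
Finally, substitute $v=v(R_+,R_-)$ from the inverse map above. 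The smallness of $v$ is only used to keep $1\pm\sqrt K v$ and $1-Kv^2$ away from zero.
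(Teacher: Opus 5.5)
Your proposal is correct and follows the same route as the paper: you identify $\nabla R_\pm$ with the left eigenvectors $l_\pm$ of $A$, contract the system with them to get the diagonal form, and compute $l_\pm^{T}H=S_\pm$. You also fill in details the paper leaves implicit, namely the explicit inverse $L=\frac{1+K}{2\sqrt K}(R_++R_-)$, $v=\tanh\bigl(\frac12(R_+-R_-)\bigr)$ for the diffeomorphism claim (consistent with the paper's later formula \eqref{eq:vinR}), and the algebra for the eigenvalues and the source term.
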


    \begin{remark}
        From this point on, for the sake of notational clarity, we will suppress the explicit dependence on the Riemann invariants. Note, however that, e.g., $v$ is considered as $v:\mathbb{R}^{2}\to \mathbb{R}$ and hence, 
        \begin{equation*}
            \partial_x v=(\partial_1v)(R_+,R_-)\partial_xR_++(\partial_2 v)(R_+,R_-)\partial_xR_-.
        \end{equation*}
        We will always consider the slots of $L$ and $v$ as functions of $R_-$ and $R_+$ and vice versa as suggested by the notation $(L,v)\mapsto (R_+(L,v),R_-(L,v))$, e.g., 
        \begin{equation*}
            \partial_v R_-(L,v)=(\partial_2 R_-)(L,v), \qquad \partial_{R_+} L(R_+,R_-)=(\partial_1 L)(R_+,R_-).
        \end{equation*}
    \end{remark}
    
    \begin{proof}[Proof of Lemma \ref{Riemann}]
	Straightforward calculation yields that the eigenvalues and left eigenvectors of $ A $ in \eqref{AH} are given by 
	\begin{equation}\label{eigen}
		\lambda_{\pm}=\frac{
		v\pm\sqrt{K}}{1\pm v\sqrt{K}}, \qquad l_{\pm}=\begin{pmatrix}
			\frac{\sqrt{K}}{1+K}\\ \pm \frac{1}{(1-v^{2})}
		\end{pmatrix}. 
	\end{equation}
	Therefore, the corresponding Riemann invariants $ R_{\pm} $ are then given by
	\begin{equation*}
		R_{\pm}(L,v)=\frac{\sqrt{K}}{1+K}L \pm \frac{1}{2}\log\frac{1+v}{1-v}. 
	\end{equation*}
	By construction, they satisfy
    \begin{equation*}
        \lambda_\pm \partial_x R_\pm = \lambda_\pm (\partial_1 R_\pm,\partial_2 R_\pm)\begin{pmatrix}
            \partial_x L \\ \partial_x v
        \end{pmatrix}=\lambda_\pm l_\pm^{T}\begin{pmatrix}
            \partial_x L \\ \partial_x v
        \end{pmatrix}=l_\pm^{T}A\begin{pmatrix}
            \partial_x L \\ \partial_x v
        \end{pmatrix}.
    \end{equation*}
    Hence, 
	\begin{align*}
		D_{\pm}R_{\pm}&=(\partial_{t}+t^{-\alpha}\lambda_{\pm}\partial_{x})R_{\pm}\\
		&=\partial_{1}R_{\pm}(\partial_{t}+t^{-\alpha}\lambda_{\pm}\partial_{x})L+\partial_{2}R_{\pm}(\partial_{t}+t^{-\alpha}\lambda_{\pm}\partial_{x})v\\
		&=l_{\pm}^{T}\partial_{t}\begin{pmatrix}
			L \\ v
		\end{pmatrix}+l_{\pm}^{T}t^{-\alpha}\lambda_{\pm}\partial_{x}\begin{pmatrix}
		L \\ v
	\end{pmatrix}=t^{-1}l_{\pm}^{T}H. 
	\end{align*}
    The source terms $l_{\pm}^{T}H$ can be written as
        \begin{equation*}
            S_\pm=l_{\pm}^{T}H=\mp \alpha (1-3K)\frac{v}{1\pm\sqrt{K}v}.
        \end{equation*}
    \end{proof}

    \begin{remark}
        Note that, due to the presence of a source in \eqref{eq:chareom}, the Riemann invariants are not preserved along characteristics. However, with knowledge of the solution on a time slab, and considering $R_-$ as fixed we can still analyze $R_+$ as solution to an ODE. 
    \end{remark}
    
	\section{Derivation of a Riccati-type equation}\label{sec:riccati}
	In this section, we will derive an ODE for $\partial_x R_+$ along a characteristic of $R_+$. The choice of $ R_{+} $ over $R_-$ is arbitrary, due to the symmetry of the equations of motion. 
    
    Commuting \eqref{eq:chareom} with $ \partial_{x} $, we find 
	\begin{align*}
		D_{+}\partial_{x}R_{+}&=t^{-1}\partial_{x}S_{+}-t^{-\alpha}\partial_{x}\lambda_{+} \partial_{x}R_{+}\\
		&=t^{-1}\Big(\partial_{1}S_{+}\partial_{x}R_{+}+\partial_{2}S_{+}\partial_{x}R_{-}\Big)-t^{-\alpha}\Big(\partial_{1}\lambda_{+}\partial_{x}R_{+}+\partial_{2}\lambda_{+} \partial_{x}R_{-}\Big)\partial_{x}R_{+}\\
		&=-t^{-\alpha}\partial_{1}\lambda_{+}(\partial_{x}R_{+})^{2}+t^{-1}\partial_{1}S_{+}\partial_{x}R_{+}+t^{-1}\partial_{2}S_{+}\partial_{x}R_{-}-t^{-\alpha}\partial_{2}\lambda_{+} \partial_{x}R_{+}\partial_{x}R_{-}. 
	\end{align*}
	We eliminate the mixed term which is the last term of this equation by the following consideration. 
    \begin{defin}
    We define the function $\tau:\mathbb{R}^{2}\to\mathbb{R}$ as the solution to the following ODE
	\begin{equation}\label{eq:tau}
    \begin{aligned}
		(\partial_{2}\log\tau)(\cdot,\cdot) &= \frac{(\partial_{2}\lambda_{+})(\cdot,\cdot)}{\lambda_{+}(\cdot,\cdot)-\lambda_{-}(\cdot,\cdot)},\\
        \tau(\cdot, 0)&=\tau_0,
    \end{aligned}
	\end{equation}
    where $\tau_0\in\mathbb{R}$. 
    \end{defin}
    \begin{remark}[Initializing $\tau_0$]\label{rem:tau0}
        From this point on, for the sake of convenience, we choose $\tau_0=1$. 
    \end{remark}
	\begin{remark}
	    Note that the denominator in \eqref{eq:tau} is bounded away from zero under the smallness assumption. This crucial feature is often referred to as \emph{strict hyperbolicity}, meaning that wave speeds never coincide. 
	\end{remark}
    With this definition at hand, we find that
	\begin{align*}
		D_{+}\tau &= \partial_{1}\tau D_{+}R_{+}+\partial_{2}\tau D_{+}R_{-}\\
		&=\partial_{1}\tau t^{-1} S_{+}+\tau \frac{\partial_{2}\lambda_{+}}{\lambda_{+}-\lambda_{-}}D_{+}R_{-}\\
		&=\partial_{1}\tau t^{-1} S_{+}+\tau \frac{\partial_{2}\lambda_{+}}{\lambda_{+}-\lambda_{-}}\Big(t^{-1}S_{-}+t^{-\alpha}(\lambda_{+}-\lambda_{-})\partial_{x}R_{-}\Big)\\
		&=\partial_{1}\tau t^{-1} S_{+}+\partial_2\tau t^{-1}S_{-}+\tau t^{-\alpha}\partial_{2}\lambda_{+}\partial_{x}R_{-},
	\end{align*}
	and therefore
	\begin{align*}
		D_{+}(\tau\partial_{x}R_{+})&=\tau\Big(-t^{-\alpha}\partial_{1}\lambda_{+}(\partial_{x}R_{+})^{2}+t^{-1}\partial_{1}S_{+}\partial_{x}R_{+}+t^{-1}\partial_{2}S_{+}\partial_{x}R_{-}\\
		&\quad-t^{-\alpha}\partial_{2}\lambda_{+} \partial_{x}R_{+}\partial_{x}R_{-}\Big)+\partial_{x}R_{+}\Big(\partial_{1}\tau t^{-1} S_{+}+\partial_2\tau t^{-1}S_{-}+\tau t^{-\alpha}\partial_{2}\lambda_{+}\partial_{x}R_{-}\Big)\\
		&=-t^{-\alpha}\tau^{-1}\partial_{1}\lambda_{+} (\tau\partial_{x}R_{+})^{2}+t^{-1}\Big(\partial_{1}S_{+}+\tau^{-1}\partial_{1}\tau S_{+}+\tau^{-1}\partial_2\tau S_{-}\Big)(\tau\partial_{x}R_{+})\\
		&\quad+\tau t^{-1}\partial_{2}S_{+}\partial_{x}R_{-}
	\end{align*}
	We can now additionally control involving $ \partial_{x}R_{-} $ the last term by introducing the following mechanism.
    \begin{defin}
        We define the function $\xi:\mathbb{R}^{3}\to \mathbb{R}$ as the unique solution to the ODE
        \begin{equation}\label{eq:xidef}
        \begin{aligned}
		\partial_{2}\xi(\cdot,\cdot,t)&=t^{-1+\alpha}\frac{\tau(\cdot,\cdot)}{\lambda_{+}(\cdot,\cdot)-\lambda_{-}(\cdot,\cdot)}\partial_{2}S_{+}(\cdot,\cdot),\\
        \xi(\cdot,0,t)&=0.
        \end{aligned}
	\end{equation}
    \end{defin}

    \begin{remark}[Explicit form of $\xi$]
    In fact, we can calculate $\xi$ explicitly in terms of $t$ and $\tau$. Simple calculations show that
    \begin{equation*} 
    \partial_2 S_+= -\frac{(1-3 K)\alpha}{(1-K)}\partial_2 \lambda_+,
    \end{equation*}
    and so the definition of $\xi$ and $\tau$ imply that 
    \begin{equation*}
    \partial_2 \xi = -\frac{(1-3 K)\alpha}{(1-K)}t^{\alpha-1}\partial_2\tau.
    \end{equation*}
    Integrating this and applying the initial conditions satisfied by $\tau$ and $\xi$ then shows that
    \begin{equation*}
    \xi = \frac{(1-3 K)\alpha}{(1-K)} t^{\alpha-1}(1-\tau).
    \end{equation*}
    \end{remark}

	We have  that
	\begin{align*}
		D_{+}(\tau\partial_{x}R_{+}-\xi)&=D_{+}(\tau\partial_{x}R_{+})-\partial_{1}\xi D_{+}R_{+}-\partial_{2}\xi D_{+}R_{-}-\partial_t \xi\\
		&=D_{+}(\tau\partial_{x}R_{+})-\partial_{1}\xi t^{-1}S_{+}-{t^{-1}(\alpha-1)\xi} \\
        &\quad -t^{-1+\alpha}\frac{\tau}{\lambda_{+}-\lambda_{-}}\partial_{2}S_{+}\Big(S_{-}t^{-1}+t^{-\alpha}(\lambda_{+}-\lambda_{-})\partial_{x}R_{-}\Big).
	\end{align*}
	Hence, finally we find that along the $ R_{+} $-characteristic, we have
	\begin{equation*}
		D_{+}(\tau\partial_{x}R_{+}-\xi)=A_{2}(\tau\partial_{x}R_{+}-\xi)^{2}+A_{1}(\tau\partial_{x}R_{+}-\xi)+A_{0},
	\end{equation*}
	with 
	\begin{align*}
		A_{2}&=-t^{-\alpha}\tau^{-1}\partial_{1}\lambda_{+},\\
		A_{1}&=t^{-1}\Big(\partial_{1}S_{+}+\tau^{-1}\partial_{1}\tau S_{+}+\tau^{-1}\partial_2\tau S_{-}\Big){+2A_2\xi},\\
		A_{0}&=-\partial_{1}\xi t^{-1}S_{+}-t^{-1+\alpha}\frac{\tau}{\lambda_{+}-\lambda_{-}}\partial_{2}S_{+}S_{-}t^{-1}\\
		&\quad +\xi t^{-1}\Big(\partial_{1}S_{+}+\tau^{-1}\partial_{1}\tau S_{+}+\tau^{-1}\partial_2\tau S_{-}\Big){+A_2 \xi^2-t^{-1}(\alpha-1)\xi}.
	\end{align*}
	Equivalently, with $ \zeta=\xi-\tau\partial_{x}R_{+} $ we have 
	\begin{equation}\label{riccati}
		D_{+}\zeta=\tilde{A}_{2}\zeta^{2}+\tilde{A}_{1}\zeta +\tilde{A}_{0}. 
	\end{equation}
    with
    \begin{equation}\label{eq:deftildeA}
        \tilde{A}_2=-A_2, \qquad \tilde{A}_1=A_1, \qquad \tilde{A}_0=-A_0.
    \end{equation}
    \begin{remark}
        The reason for the sign change in \eqref{riccati} is so that the leading order coefficient, $\tilde{A}_2$, drives blowup towards $+\infty$ rather than $-\infty$.
    \end{remark}
	\section{Preliminary estimates}\label{sec:est}

    The result in this article hinges on the fact that the velocity is bounded by the initial data, as long as a classical solution exists. The following section makes this rigorous. 
    
    \subsection{Invariant region argument}
    We start with a definition.
	\begin{defin}
		Let $\mu,\delta>0$ and $\rho \in C^{1}([t_{0},\infty);\mathbb{R})$ with $\rho(t_{0})=0$ and $\rho^{\prime}(t)>0$ for all $t\in [t_{0},\infty)$. We define 
        \begin{equation}
            \begin{aligned}
                D_{\mu}&\coloneqq \{(x,y)\in \mathbb{R}^{2}\vert \max(|x|,|y|)< \mu\},\\
                D_{\mu}^{\delta}(t)&\coloneqq \{(x,y)\in \mathbb{R}^{2}\vert \max(|x|,|y|)< \mu +\delta \rho(t) \}.
            \end{aligned}
        \end{equation}
	\end{defin}
	\begin{lem}\label{velbounded}
		Let $t_1>t_0$, $\mu>0$ and $(R_+,R_-)$ be a classical solution on $[t_0,t_1]$ with $$(R_{+},R_-)(t_0,x)\in D_\mu$$ for all $x\in S^1$. Then $(R_{+},R_-)$ remains in $\overline{D_\mu}$ for all $t\in [t_0,t_1]$ and $x\in S^1$.
	\end{lem}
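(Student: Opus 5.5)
The plan is a maximum-principle (invariant region) argument along characteristics. The relaxed regions $D_\mu^\delta(t)$ remove the borderline case, and we let $\delta\to 0$ at the end. The key structural fact is a sign condition for the source terms in \eqref{eq:chareom}. Inverting \eqref{eq:riemann} gives $R_+-R_-=\log\frac{1+v}{1-v}$, so $v=\tanh\big(\tfrac{R_+-R_-}{2}\big)$, and $\operatorname{sign} v=\operatorname{sign}(R_+-R_-)$. Since $0<K<\tfrac13$ and $|v|<1$, we have $1\pm\sqrt{K}v>0$. Hence from Lemma \ref{Riemann}:
\begin{equation*}
\operatorname{sign} S_+=-\operatorname{sign}(R_+-R_-),\qquad \operatorname{sign} S_-=\operatorname{sign}(R_+-R_-).
\end{equation*}
Consequently, on the boundary of any square $\{\max(|x|,|y|)\le m\}$ the source never points outward. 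If $R_+=m\ge |R_-|$ then $v\ge 0$ and $S_+\le 0$. If $R_+=-m$ then $v\le 0$ and $S_+\ge0$. If $R_-=m$ then $R_+-R_-\le 0$ and $S_-\le 0$. If $R_-=-m$ then $S_-\ge 0$.

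Fix $\delta>0$ and any admissible $\rho$ (e.g.\ $\rho(t)=t-t_0$). We claim $(R_+,R_-)(t,x)\in D_\mu^\delta(t)$ for all $(t,x)\in[t_0,t_1]\times S^1$.
\begin{itemize}
\item At $t_0$ this holds, since $\rho(t_0)=0$ and the data lie in $D_\mu$.
\item Suppose the claim fails. By continuity of $(R_+,R_-)$ and compactness of $S^1$, the function $t\mapsto \max_x \max(|R_+|,|R_-|)(t,x)-\mu-\delta\rho(t)$ is continuous. Hence there is a first time $t_*\in(t_0,t_1]$ and a point $x_*$ at which, say, $R_+(t_*,x_*)=\mu+\delta\rho(t_*)$, while $(R_+,R_-)(t,x)\in D_\mu^\delta(t)$ for all $t<t_*$. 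The other three boundary cases are identical up to signs.
\item Let $\varrho_+$ be the $R_+$-characteristic through $(t_*,x_*)$, i.e.\ $\dot\varrho_+=t^{-\alpha}\lambda_+(R_+,R_-)(t,\varrho_+)$, $\varrho_+(t_*)=x_*$. It exists on $[t_0,t_*]$ because the solution is $C^1$.
\item Set $f(t)=R_+(t,\varrho_+(t))-\mu-\delta\rho(t)$. Then $f<0$ on $[t_0,t_*)$ and $f(t_*)=0$, so $f'(t_*)\ge 0$.
\item By \eqref{eq:chareom}, $f'(t_*)=t_*^{-1}S_+(R_+,R_-)(t_*,x_*)-\delta\rho'(t_*)$.
\item At $(t_*,x_*)$ we have $R_-\le \mu+\delta\rho(t_*)=R_+$, so $v\ge0$ and $S_+\le 0$. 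Together with $\rho'>0$ this gives $f'(t_*)<0$, a contradiction.
\end{itemize}
In the case $R_-=\mu+\delta\rho(t_*)$ one uses the $R_-$-characteristic and $S_-\le0$. In the two negative cases one runs the same argument with $f=-R_\pm-\mu-\delta\rho$.

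Having shown $(R_+,R_-)(t,x)\in D^\delta_\mu(t)\subset \{\max(|x|,|y|)<\mu+\delta\rho(t_1)\}$ for every $\delta>0$, we let $\delta\to0$ and obtain $(R_+,R_-)\in\overline{D_\mu}$ on $[t_0,t_1]\times S^1$.

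The main point requiring care is the justification that $f$ first touches the moving boundary from below and that the resulting inequality is strict. That is exactly why the $\delta\rho$ perturbation is needed, since the source sign condition alone only gives $S_+\le 0$, not $<0$. A second detail is that $v$ stays in a range where $A$, $H$ and $\lambda_\pm$ are smooth up to $t_*$. This follows because $|v|\le \tanh(\mu+\delta\rho(t_1))<1$ on $[t_0,t_*]$ by the a priori containment before $t_*$.
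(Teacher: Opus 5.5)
Your proof is correct and follows the paper's argument essentially step for step. Both use the relaxed domains $D_\mu^\delta(t)$ and a first touching time. Both then differentiate $R_+-\mu-\delta\rho$ along the $R_+$-characteristic through the touching point, use $R_+\ge R_-\Rightarrow v\ge 0\Rightarrow S_+\le 0$ to contradict $f'(t_*)\ge 0$, and let $\delta\to 0$. You are somewhat more explicit than the paper: you fix $\rho(t)=t-t_0$, justify the first exit time via continuity of $\max_x\max(|R_+|,|R_-|)$, and tabulate the source signs on all four faces.
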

	\begin{proof}
		Let $\delta>0$. We start by proving the weaker statement that $(R_{+},R_-)$ never leaves the expanding domain $D_\mu^\delta$. 
        
        Assume that the contrary is true. Then, there exists a minimal time $T$ with $t_0<T\leq t_1$ and $X\in S^1$, such that
        \begin{equation*}
            \max(|R_{+}(T,X)|;|R_-(T,X)|)=\mu +\delta \rho(T).
        \end{equation*}
        This implies that one of the following most be true:
        \begin{align*}
            R_{+}(T,X)&=\mu +\delta \rho(T), &R_{+}(T,X)=-(\mu +\delta \rho(T)),\\
            R_-(T,X)&=\mu +\delta \rho(T), &R_-(T,X)=-(\mu +\delta \rho(T)).
        \end{align*}
        Assume that the first equation is satisfied and let $\gamma$ be the characteristic $R_{+}$, such that $\gamma(T)=X$. Hence, 
        \begin{equation*}
            R_{+}(T,\gamma(T))=\mu +\delta \rho(T).
        \end{equation*}
        Furthermore, we define the function $q:[t_0,T]\to\mathbb{R}$ via
        \begin{equation*}
            q(t)\coloneqq R_{+}(t,\gamma(t))-(\mu +\delta \rho(t)).
        \end{equation*}
        Now, via the assumption that $T$ is minimal, i.e., it is the time of first exit, we infer that $q(t)<0$ for all $\tilde{t}<t<T$ and $q(T)=0$, provided $T-\tilde{t}>0$ is sufficiently small. As $q$ is differentiable, this implies that $q^\prime(T)\geq 0$. 

        Now, differentiating $q$ yields
        \begin{equation*}
            q^\prime(t)= (D_{+}R_{+})(t,\gamma(t))-\delta \rho^\prime(t)=-t^{-1}\alpha (1-3K)\frac{v}{1+\sqrt{K}v}(t,\gamma(t))-\delta \rho^\prime(t).
        \end{equation*}
        Inverting \eqref{eq:riemann} yields
        \begin{equation}\label{eq:vinR}
            v=\frac{e^{R_{+}}-e^{R_-}}{e^{R_{+}}+e^{R_-}},
        \end{equation}
        but since $R_{+}(T,X)$ is maximal, we have that $R_{+}(T,X)\geq R_-(T,X)$ and hence,
        \begin{equation*}
           {v(T,X)\geq 0}.
        \end{equation*}
        This implies that
        \begin{equation*}
            q^\prime(T)\leq -\delta \rho^\prime(T)<0,
        \end{equation*}
        which is a contradiction. 

        Now assume that $R_{+}(T,X)=-(\mu +\delta \rho(T))$. Again, as $t$ approaches $T$, $q$, now defined via
        \begin{equation*}
            q(t)=R_{+}(t,\gamma(t))+\mu+\delta \rho(t),
        \end{equation*}
        is positive as well as $q(T)=0$. Hence, $q^\prime(T)\leq 0$. However, since $R_{+}$ is now minimal, $v(T,X)\leq 0$ and hence, 
        \begin{equation*}
            q^\prime(T)\geq \delta \rho^\prime(T)>0.
        \end{equation*}
        The analogous statements for $R_-$ can be similarly derived, acknowledging the change of sign in the source. Alternatively, one can note the symmetry of the problem in the Riemann invariants. 

        We have therefore shown that the solution can never leave the expanding domain $D_\mu^\delta$. However, since this proof was independent of the size of $\delta$, we can let $\delta \to 0$ and conclude that $(R_{+},R_-)$ can never leave $\overline{D_\mu}$. 
	\end{proof}

    \subsection{Estimates on the Riccati-coefficients}

    \begin{lem}\label{basicest}
        Given the estimate $\max(|R_{+}|,|R_-|)<\epsilon$ where $\epsilon>0$ is sufficiently small, we have the following estimates on composite quantities:
        \begin{alignat*}{2}
            e^{R_{+}}&=1+\mathcal{O}(\epsilon),&\qquad 
            e^{R_-}&=1+\mathcal{O}(\epsilon),\\
            v&=\mathcal{O}(\epsilon),&S_\pm&=\mathcal{O}(\epsilon),\\
            \lambda_+&=\sqrt{K}+\mathcal{O}(\epsilon), &\lambda_-&=-\sqrt{K}+\mathcal{O}(\epsilon),\\
            \partial_1 v&=\frac{1}{2}+\mathcal{O}(\epsilon^2),& \partial_{2} v&=-\frac{1}{2}+\mathcal{O}(\epsilon^2),\\
            \partial_1 \lambda_+ &= \frac{1-K}{2}+\mathcal{O}(\epsilon), & \partial_2\lambda_+ &= -\frac{1-K}{2}+\mathcal{O}(\epsilon),\\
            \partial_1\tau&=\mathcal{O}(\epsilon) ,& \partial_2\tau&= -\frac{1-K}{4\sqrt{K}}+\mathcal{O}(\epsilon),\\
            \partial_{1}S_+&=-\frac{\alpha}{2}(1-3K)+\mathcal{O}(\epsilon),& \partial_{2}S_+&=\frac{\alpha}{2}(1-3K)+\mathcal{O}(\epsilon),\\
            \tau &= 1+\mathcal{O}(\epsilon),& \xi&=t^{-1+\alpha}\mathcal{O}(\epsilon).
        \end{alignat*}
    \end{lem}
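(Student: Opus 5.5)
Every quantity in the list is a smooth function of $(R_+,R_-)$ near the origin. So the plan is to Taylor expand each one around $(0,0)$, using that the relevant derivatives are smooth and hence bounded on a fixed compact neighbourhood. The $\mathcal{O}(\cdot)$ constants are then uniform over all $(R_+,R_-)$ with $\max(|R_+|,|R_-|)<\epsilon$, with $\epsilon\le\epsilon_0$ fixed small. The key simplification is the inverted relation \eqref{eq:vinR}, which can be rewritten as
\begin{equation*}
v=\tanh\Big(\frac{R_+-R_-}{2}\Big).
\end{equation*}
From it I read off $e^{R_\pm}=1+\mathcal{O}(\epsilon)$ and $v=\mathcal{O}(\epsilon)$ directly. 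Since $\partial_1 v=\frac12(1-v^2)=-\partial_2 v$, this gives $\partial_1 v=\frac12+\mathcal{O}(\epsilon^2)$ and $\partial_2 v=-\frac12+\mathcal{O}(\epsilon^2)$. The quadratic error here comes from $v^2=\mathcal{O}(\epsilon^2)$.

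Next I treat $\lambda_\pm$ and $S_\pm$ as functions of $v$ alone and use the chain rule. The expansions $\lambda_\pm=\pm\sqrt K+\mathcal{O}(\epsilon)$ and $S_\pm=\mathcal{O}(\epsilon)$ are immediate from $v=\mathcal{O}(\epsilon)$. For the derivatives I compute
\begin{equation*}
\frac{d\lambda_+}{dv}=\frac{1-K}{(1+\sqrt K v)^2},\qquad \frac{dS_+}{dv}=-\frac{\alpha(1-3K)}{(1+\sqrt K v)^2}.
\end{equation*}
Both equal their value at $v=0$ up to $\mathcal{O}(\epsilon)$. Multiplying by $\partial_{1,2}v=\pm\frac12+\mathcal{O}(\epsilon^2)$ gives the stated values of $\partial_{1,2}\lambda_+$ and $\partial_{1,2}S_+$.

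For $\tau$, I first compute
\begin{equation*}
\lambda_+-\lambda_-=\frac{2\sqrt K(1-v^2)}{1-Kv^2}=2\sqrt K+\mathcal{O}(\epsilon^2),
\end{equation*}
which is bounded away from zero. Hence the right-hand side of \eqref{eq:tau}, call it $f(R_+,R_-)$, is smooth and satisfies $f=-\frac{1-K}{4\sqrt K}+\mathcal{O}(\epsilon)$. Integrating with $\tau_0=1$ (Remark \ref{rem:tau0}) gives
\begin{equation*}
\log\tau(R_+,R_-)=\int_0^{R_-}f(R_+,s)\,ds.
\end{equation*}
Since $|R_-|<\epsilon$, this yields $\tau=1+\mathcal{O}(\epsilon)$, and then $\partial_2\tau=\tau f=-\frac{1-K}{4\sqrt K}+\mathcal{O}(\epsilon)$. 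For $\partial_1\tau$ I differentiate under the integral: $\partial_1\log\tau=\int_0^{R_-}\partial_1 f(R_+,s)\,ds$. Because $\partial_1 f$ is bounded on the compact neighbourhood, this is $\mathcal{O}(\epsilon)$, and so $\partial_1\tau=\mathcal{O}(\epsilon)$.

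Finally, the explicit formula $\xi=\frac{(1-3K)\alpha}{1-K}t^{\alpha-1}(1-\tau)$ from the remark, combined with $\tau=1+\mathcal{O}(\epsilon)$, gives $\xi=t^{-1+\alpha}\mathcal{O}(\epsilon)$. I do not expect a real obstacle in this lemma. The only step needing care is $\partial_1\tau$, because $\tau$ is defined by integrating in the second slot only; its smallness comes entirely from the short integration interval $|R_-|<\epsilon$ together with the smoothness of the integrand.
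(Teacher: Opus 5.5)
Your proposal is correct and follows essentially the same route as the paper: Taylor-expanding smooth functions of $(R_+,R_-)$ near the origin, writing $\log\tau(R_+,R_-)=\int_0^{R_-}f(R_+,s)\,ds$, differentiating under the integral to get $\partial_1\tau=\mathcal{O}(\epsilon)$, and using the fundamental theorem of calculus to get $\partial_2\tau$. Your explicit use of $v=\tanh\big(\frac{R_+-R_-}{2}\big)$, the chain rule through $v$, and the closed form of $\xi$ simply fill in the steps the paper leaves to ``similar considerations''.
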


    \begin{proof}
        Using Taylor expansion, it is straightforward to see that
        \begin{equation*}
            1-e^{R_{+}}=R_{+}+\mathcal{O}(R_{+}^{2})=\mathcal{O}(\epsilon).
        \end{equation*}
        Clearly, the same is true for $e^{R_-}$. Using \eqref{eq:vinR}, we find that
        \begin{equation*}
            |v|=\frac{|e^{R_{+}}-e^{R_{-}}|}{e^{R_{+}}+e^{R_{-}}}\leq |e^{R_{+}}-e^{R_{-}}|\leq|1-e^{R_{+}}|+|1-e^{R_-}| =\mathcal{O}(\epsilon). 
        \end{equation*}
        The estimate for $\lambda_\pm, \partial_{1,2}v, \partial_1\lambda_+$ are obtained in a quite similar fashion. Now, keeping in mind \eqref{eq:tau}, we see that
        \begin{equation}\label{eq:tauint}
            \log\tau(R_+,R_-)-\log\tau_0=\int_{0}^{R_-}\frac{\partial_2\lambda_{+}(R_+,s)}{\lambda_{+}(R_+,s)-\lambda_{-}(R_+,s)}ds.
        \end{equation}
        Taking into account the previous considerations, this yields that there exists a constant $C>0$ such that
        \begin{equation*}
            |\log\tau(R_+,R_-)-\log\tau_0| \leq |R_-|(C+\mathcal{O}(\epsilon)).
        \end{equation*}
        By remark \ref{rem:tau0}, this yields
        \begin{equation*}
            \tau(R_+,R_-)=e^{\mathcal{O}(\epsilon)}=1+\mathcal{O}(\epsilon). 
        \end{equation*}
        Taking the derivative with respect to $R_+$ in \eqref{eq:tauint} produces 
        \begin{equation*}
            \partial_{R_+}\tau(R_+,R_-)=\tau \int_{0}^{R_-}\partial_{R_+}\left(\frac{\partial_2\lambda_{+}(R_+,s)}{\lambda_{+}(R_+,s)-\lambda_{-}(R_+,s)}\right)ds.
        \end{equation*}
        which is clearly bounded by $\mathcal{O}(\epsilon)$. On the other hand, using the fundamental theorem of calculus, we find that
        \begin{equation*}
            \partial_{R_-}\tau(R_+,R_-)=\tau \frac{\partial_2\lambda_{+}(R_+,R_-)}{\lambda_{+}(R_+,R_-)-\lambda_{-}(R_+,R_-)},
        \end{equation*}
        which asymptotes to $-\frac{1-K}{4\sqrt{K}}$. Similar considerations produce the other estimates in a straightforward manner. 
    \end{proof}
    
	In the proof of Theorem \ref{blowup}, we will be interested in the leading order behavior of the coefficients $ \tilde{A}_{0},\tilde{A}_1$ and $\tilde{A}_2$ in the regime of small Riemann invariants $R_\pm$. Hence, we compute the following. 
	\begin{lem}\label{lem:riccatidiff}
		Given the estimate $\max(|R_{+}|,|R_-|)<\epsilon$ where $\epsilon>0$ is sufficiently small, we establish the following difference growth rates.
		\begin{align}
			\left|\frac{1-K}{2}t^{-\alpha}-\tilde{A}_{2}\right|&\leq t^{-\alpha} \mathcal{O}(\epsilon),\label{eq:asymptA2}\\
			\left|-\frac{\alpha}{2}(1-3K)t^{-1}-\tilde{A}_{1}\right|&\leq t^{-1} \mathcal{O}(\epsilon),\\
			|\tilde{A}_{0}|&\leq t^{-2+\alpha} \mathcal{O}(\epsilon).
		\end{align}
	\end{lem}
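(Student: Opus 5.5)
The plan is to substitute the asymptotics of Lemma \ref{basicest} into the explicit formulas for $A_0,A_1,A_2$ and then use \eqref{eq:deftildeA}. Throughout, $\max(|R_+|,|R_-|)<\epsilon$, so every smooth function of $(R_+,R_-)$ that vanishes at the origin is $\mathcal{O}(\epsilon)$. Every bounded one stays bounded, and $\tau^{-1}=1+\mathcal{O}(\epsilon)$ because $\tau=1+\mathcal{O}(\epsilon)$. Strict hyperbolicity gives $\lambda_+-\lambda_-=2\sqrt{K}+\mathcal{O}(\epsilon)$, which is bounded away from zero.

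For $\tilde A_2=t^{-\alpha}\tau^{-1}\partial_1\lambda_+$, I would combine $\partial_1\lambda_+=\frac{1-K}{2}+\mathcal{O}(\epsilon)$ with $\tau^{-1}=1+\mathcal{O}(\epsilon)$. This gives $\tilde A_2=t^{-\alpha}\bigl(\frac{1-K}{2}+\mathcal{O}(\epsilon)\bigr)$, which is \eqref{eq:asymptA2}.

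For $\tilde A_1=A_1$, the leading term is $t^{-1}\partial_1S_+=t^{-1}\bigl(-\frac{\alpha}{2}(1-3K)+\mathcal{O}(\epsilon)\bigr)$. The next terms are $t^{-1}\tau^{-1}\partial_1\tau\,S_+$ and $t^{-1}\tau^{-1}\partial_2\tau\,S_-$. Each is a bounded factor times $S_\pm=\mathcal{O}(\epsilon)$, and the first is even $\mathcal{O}(\epsilon^2)$. The remaining term $2A_2\xi$ is bounded by $t^{-\alpha}\cdot t^{-1+\alpha}\mathcal{O}(\epsilon)=t^{-1}\mathcal{O}(\epsilon)$, using $\xi=t^{-1+\alpha}\mathcal{O}(\epsilon)$. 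Summing these gives the second estimate.

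For $\tilde A_0=-A_0$, I would bound each term of $A_0$ separately by $t^{-2+\alpha}\mathcal{O}(\epsilon)$, using $t\ge t_0\ge 1$ and $\alpha<1$ when comparing powers. I take the terms in order:
\begin{itemize}[label=--]
\item The term $\partial_1\xi\,t^{-1}S_+$: from the explicit formula $\xi=\frac{(1-3K)\alpha}{1-K}t^{\alpha-1}(1-\tau)$ we get $\partial_1\xi=-\frac{(1-3K)\alpha}{1-K}t^{\alpha-1}\partial_1\tau=t^{\alpha-1}\mathcal{O}(\epsilon)$. With $S_+=\mathcal{O}(\epsilon)$ this is $t^{-2+\alpha}\mathcal{O}(\epsilon^2)$.
\item The term $t^{-2+\alpha}\frac{\tau}{\lambda_+-\lambda_-}\partial_2S_+S_-$: this is $t^{-2+\alpha}\mathcal{O}(\epsilon)$ because $S_-=\mathcal{O}(\epsilon)$ and the other factors are bounded.
\item The term $\xi t^{-1}(\cdots)$: the bracket is bounded, so this is $t^{-2+\alpha}\mathcal{O}(\epsilon)$.
\item The term $A_2\xi^2$: this is $t^{-\alpha}t^{-2+2\alpha}\mathcal{O}(\epsilon^2)=t^{-2+\alpha}\mathcal{O}(\epsilon^2)$.
\item The term $t^{-1}(\alpha-1)\xi$: this is $t^{-2+\alpha}\mathcal{O}(\epsilon)$.
\end{itemize}

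The only real care point is uniformity in $t$, because $\xi$ carries an explicit $t$-dependence. It has to be tracked exactly through the explicit formula for $\xi$ and not through the crude $\mathcal{O}$-notation, so that the implied constants depend only on $K$ and $\alpha$ and not on $t$.

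I expect no step to be a serious obstacle. The one point needing checking is that the constants in Lemma \ref{basicest} are uniform, which holds because all the functions of $(R_+,R_-)$ involved are smooth on a fixed compact neighborhood of the origin.
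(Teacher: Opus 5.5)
Your proposal is correct and follows essentially the paper's proof. Both arguments substitute the asymptotics of Lemma \ref{basicest} into the expressions for $A_0,A_1,A_2$ via \eqref{eq:deftildeA}, take $t^{-1}\partial_1 S_+$ as the leading part of $\tilde A_1$, and use $\xi=t^{-1+\alpha}\mathcal{O}(\epsilon)$ to get exactly the powers $t^{-1}$ and $t^{-2+\alpha}$ for the $\xi$-terms. Your term-by-term treatment of $\tilde A_0$, including bounding $\partial_1\xi$ through the closed form $\xi=\frac{(1-3K)\alpha}{1-K}t^{\alpha-1}(1-\tau)$, simply spells out what the paper calls ``similar arguments''. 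Since every term already carries exactly $t^{-2+\alpha}$, you do not need $t_0\ge 1$ or $\alpha<1$ to compare powers.
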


    \begin{proof}
        The proof will be a rudimentary application of Lemma \ref{basicest} to the expressions in \eqref{eq:deftildeA}. The estimate for \eqref{eq:asymptA2} is immediate. A closer look at $\tilde{A}_1$ reveals that most of the terms therein are perturbative, as
        \begin{align*}
            |\tau^{-1}\partial_1 \tau S_{+}|&\leq|\tau^{-1}| |\partial_1 \tau| |S_{+}|\lesssim \mathcal{O}(\epsilon),\\
            |\tau^{-1}\partial_2 \tau S_-|&\leq |\tau^{-1}| |\partial_2\tau||S_-|\lesssim \mathcal{O}(\epsilon),\\
            |A_2\xi|&\leq t^{-\alpha}|\tau^{-1}||\partial_1\lambda_+||\xi|\lesssim t^{-1}\mathcal{O}(\epsilon).
        \end{align*}
         The leading order behavior is thus determined by $t^{-1}\partial_1 S_+$ and given by $B_1$. Similar arguments can be applied $\tilde{A}_0$, where we note in particular that
        \begin{equation*}
            |A_2\xi^{2}|\lesssim t^{-\alpha}t^{-2+2\alpha}\mathcal{O}(\epsilon)=t^{-2+\alpha}\mathcal{O}(\epsilon).
        \end{equation*}
        This is the desired decay rate as indicated above. 
    \end{proof}

    \section{Proof of Theorem \ref{blowup}} \label{sec:thm1}

    We are now ready to prove Theorem \ref{blowup}. It combines several aspects of the equations of motion of the characteristics \eqref{eq:chareom}. First, \eqref{riccati} is a Riccati-type equation that determines the evolution of the quantity $\zeta$ along a specific characteristic of $R_+$. The coefficients in \eqref{riccati} are controlled by the invariant region argument, Lemma \ref{velbounded}, and the consequential uniform estimates in Lemma \ref{basicest} and \ref{lem:riccatidiff}. In addition, these Lemmata provide control over $\xi$ and $\tau$, guaranteeing that blowup of $\zeta$ corresponds to blowup of $\partial_xR_+$. Finally, we show that the we can construct a specific sequence of data that displays this blowup behavior while going to $0$ in the desired topology. 
    
    \begin{proof}[Proof of Theorem \ref{blowup}] 
    Let $\epsilon>0$. We choose initial data $(\mathring{R}_+,\mathring{R}_-)\in C^{\infty}(S^1)\times C^{\infty}(S^1)$, such that
        \begin{equation}\label{eq:data}
            \mathring{R}_-=0, \qquad \|\mathring{R}_+\|_{L^{\infty}}<\epsilon, \qquad \|\mathring{R}_{+}^{\prime}\|_{L^{\infty}}=\delta(\epsilon),
        \end{equation}
        where $\delta(\epsilon)>0$ is a function to be chosen later on. Furthermore, we assume that there exists a $z\in S^{1}$ such that $\mathring{R}^{\prime}_+(z)=-\delta(\epsilon)$. 
        
        Let $(R_+,R_-)$ be the unique classical solution to \eqref{system} with initial data $(\mathring{R}_+,\mathring{R}_-)$ on the time interval $[t_0,t_*)$. This is guaranteed to exist by Proposition \ref{localwellposed}, provided $t_*$ is sufficiently small. We define the characteristic $\varrho_+$ via
        \begin{equation*}
            \frac{d}{dt}\varrho_+(t)= t^{-\alpha}\lambda_+(t,\varrho_+(t)),\qquad \varrho_+(t_0)=z.
        \end{equation*}
        By \eqref{eq:tau} and \eqref{eq:xidef} this implies that 
        \begin{equation*}
            \zeta(t_0,\varrho_+(t_0))=\xi(\mathring{R}_+(z),0,t_0)-\tau(\mathring{R}_+(z),0)\mathring{R}_+^{\prime}(z)=-\mathring{R}_+^{\prime}(z)=\delta(\epsilon).
        \end{equation*}
        The evolution of $\zeta(\cdot,\varrho_+(\cdot))$ is determined by \eqref{riccati}. For the sake of simplicity, we introduce the constant
        \begin{equation*}
            \gamma\coloneqq \alpha\frac{3}{2}(1-K)-1.
        \end{equation*}
        Note that the statement of Theorem \ref{blowup} includes the two cases $\gamma=0$ and $\gamma<0$, which we will analyze separately. Throughout the proof, we will use explicit constants $c_i$ where $i\in \mathbb{N}$. These constants will always be tacitly assumed to be greater than $0$ and sufficiently small or large depending on the context. 

        \textbf{The subcritical case ($\gamma<0$).} We start with the subcritical case, i.e., $\gamma<0$. Consider the evolution of $\zeta$ along $\varrho_+$. Using the notation of Lemma \ref{lem:blowup}, the uniform bound of $(R_+,R_-)$ implied by Lemma \ref{velbounded}, and the estimate in Lemma \ref{lem:riccatidiff}, we have the following estimate
        \begin{equation}\label{eq:K1}
        \begin{aligned}
            K_1(t^{\prime})&=\int_{t_0}^{t^{\prime}}\tilde{A}_1(t,\varrho_+(t))dt=-\int_{t_0}^{t^{\prime}}\left(\frac{\alpha}{2}(1-3K)+\mathcal{O}(\epsilon)\right)t^{-1}dt\\
            &=-\left(\frac{\alpha}{2}(1-3K)+\mathcal{O}(\epsilon)\right)\log\frac{t^{\prime}}{t_0}.
            \end{aligned}
        \end{equation}
        Provided $\epsilon$ is sufficiently small such that for all instances of $\mathcal{O}(\epsilon)$ we have $\gamma+\mathcal{O}(\epsilon)<0$, we can compute
        \begin{align*}
            K_0(t_*)&=\int_{t_0}^{t_*}e^{-K_1(t)}|\tilde{A}_0(t,\varrho_+(t))|dt=\int_{t_0}^{t_*}\left(\frac{t}{t_0}\right)^{\frac{\alpha}{2}(1-3K)+\mathcal{O}(\epsilon)}\mathcal{O}(\epsilon)t^{-2+\alpha}dt\\
            &=\mathcal{O}(\epsilon)t_0^{-\frac{\alpha}{2}(1-3K)+\mathcal{O}(\epsilon)}\frac{t_*^{\gamma+\mathcal{O}(\epsilon)}-t_0^{\gamma+\mathcal{O}(\epsilon)}}{\gamma+\mathcal{O}(\epsilon)},
        \end{align*}
        and
        \begin{align*}
            K_2(t_*)&=\int_{t_0}^{t_*}e^{K_1(t)}\tilde{A}_2(t,\varrho_+(t))dt=\left(\frac{1-K}{2}+\mathcal{O}(\epsilon)\right)\int_{t_0}^{t_*}\left(\frac{t}{t_0}\right)^{-\frac{\alpha}{2}(1-3K)+\mathcal{O}(\epsilon)}t^{-\alpha}dt\\
            &=\left(\frac{1-K}{2}+\mathcal{O}(\epsilon)\right)t_0^{\frac{\alpha}{2}(1-3K)+\mathcal{O}(\epsilon)}\frac{t_*^{-\gamma+\mathcal{O}(\epsilon)}-t_0^{-\gamma+\mathcal{O}(\epsilon)}}{-\gamma+\mathcal{O}(\epsilon)}.
        \end{align*}
        We calculate
        \begin{equation}\label{eq:blowupcon}
        \begin{aligned}
            K_2(t_*)^{-1}+K_{0}(t_*)&= \left(\left(\frac{1-K}{2}+\mathcal{O}(\epsilon)\right)t_0^{\frac{\alpha}{2}(1-3K)+\mathcal{O}(\epsilon)}\frac{t_*^{-\gamma+\mathcal{O}(\epsilon)}-t_0^{-\gamma+\mathcal{O}(\epsilon)}}{-\gamma+\mathcal{O}(\epsilon)}\right)^{-1}\\
            &\quad+\mathcal{O}(\epsilon)t_0^{-\frac{\alpha}{2}(1-3K)+\mathcal{O}(\epsilon)}\frac{t_*^{\gamma+\mathcal{O}(\epsilon)}-t_0^{\gamma+\mathcal{O}(\epsilon)}}{\gamma+\mathcal{O}(\epsilon)}\\
            &\simeq \left(\left(\frac{t_*}{t_0}\right)^{-\gamma+\mathcal{O}(\epsilon)}-1\right)^{-1}-\mathcal{O}(\epsilon)\left(\left(\frac{t_*}{t_0}\right)^{\gamma+\mathcal{O}(\epsilon)}-1\right)\\
            &\simeq \left(\left(\frac{t_*}{t_0}\right)^{-\gamma+\mathcal{O}(\epsilon)}-1\right)^{-1}+\mathcal{O}(\epsilon),
        \end{aligned}
        \end{equation}
        where we used the fact that for all $ t\in [t_0,\infty)$,
        \begin{equation*}
            0\leq \left|\left(\frac{t}{t_0}\right)^{\gamma+\mathcal{O}(\epsilon)}-1\right|\leq 1.
        \end{equation*}
        A short calculation yields that \eqref{eq:cond} is satisfied if
        \begin{equation}\label{eq:subupper}
            t_{*}= t_0\left(1+\frac{1}{c_1\delta+\mathcal{O}(\epsilon)}\right)^{\frac{1}{-\gamma+\mathcal{O(\epsilon)}}}.
        \end{equation}
        Then, $\delta(\epsilon)=\bar{C}\epsilon$ with $\bar{C}>0$ sufficiently large, implies that $c_1\delta+\mathcal{O}(\epsilon) >0$. Therefore, the calculation
        \begin{equation*}
            t_*\lesssim  \left(\frac{1}{\delta+\mathcal{O}(\epsilon)}\right)^{\frac{1}{-\gamma+\mathcal{O}(\epsilon)}}\left(1+\delta+\mathcal{O}(\epsilon)\right)^{\frac{1}{-\gamma+\mathcal{O}(\epsilon)}}\simeq \delta^{\frac{1}{\gamma+\mathcal{O}(\epsilon)}}=\delta^{\frac{1}{\gamma+\mathcal{O}(\delta)}}=\delta^{\frac{1}{\gamma}}\delta^{-\frac{\gamma^{-2}\mathcal{O}(\delta)}{1+\gamma^{-1}\mathcal{O}(\delta)}}\simeq  \delta^{\frac{1}{\gamma}}
        \end{equation*}
        yields the desired upper bound on the lifespan. 
        
        Now consider the characteristic $\varrho$ with $\varrho(t_0)=x$ with $x\in S^{1}$ arbitrary. A similar calculation to \eqref{eq:blowupcon} for the ODE satisfied by $\zeta(\cdot,\varrho(\cdot))$ finds an analogous result
        \begin{equation*}
            K_2(t_*)^{-1}-K_{0}(t_*)\simeq \left(\left(\frac{t_*}{t_0}\right)^{-\gamma+\mathcal{O}(\epsilon)}-1\right)^{-1}+\mathcal{O}(\epsilon),
        \end{equation*}
        an hence, by Lemma \ref{lem:riccatiexist}, the lifespan satisfies the bound
        \begin{equation*}
            t_{*}\gtrsim t_0\left(1+\frac{1}{\delta+\mathcal{O}(\epsilon)}\right)^{\frac{1}{-\gamma+\mathcal{O(\epsilon)}}}.
        \end{equation*}
        Note that, by symmetry of $R_+$ and $R_-$, and the fact that $\mathring{R}^{\prime}_-=0$, $\partial_x R_-$ cannot blow up before $\partial_xR_+$. To see this, one can simply repeat the arguments of section \ref{sec:riccati} and \ref{sec:est}, but tracking along the characteristics of $R_-$ instead. This yields a similar equations to \eqref{riccati} for $\partial_xR_-$. This, in turn, provides a lower bound on the blowup-time of $\partial_x R_-$, by an analogous analysis to the one above. 
        
        In addition, $\xi$ and $\tau-1$ are bounded uniformly in time by Lemma \ref{basicest}, ensuring that the blowup of $\zeta$ along $\varrho_+$ is indeed in $\partial_xR_+$.
        
        \textbf{The critical case ($\gamma=0$).} Again, following $\varrho_+$, we get that
        \begin{align*}
            K_0(t_*)&=\int_{t_0}^{t_*}e^{-K_1(t)}|\tilde{A}_0(t,\varrho_+(t))|dt=\int_{t_0}^{t_*}\left(\frac{t}{t_0}\right)^{\frac{\alpha}{2}(1-3K)+\mathcal{O}(\epsilon)}\mathcal{O}(\epsilon)t^{-2+\alpha}dt\\
            &\simeq \int_{t_0}^{t_*}\left(\frac{t}{t_0}\right)^{-1+\mathcal{O}(\epsilon)}\mathcal{O}(\epsilon)dt\lesssim \mathcal{O}(\epsilon)\frac{\left(\frac{t_*}{t_0}\right)^{c_2\epsilon}-1}{c_2\epsilon}\lesssim \left(\frac{t_*}{t_0}\right)^{c_2\epsilon}-1.
        \end{align*}
        Furthermore, $K_2$ can be estimated via
        \begin{align*}
            K_2(t_*)&=\int_{t_0}^{t_*}e^{K_1(t)}\tilde{A}_2(t,\varrho_+(t))dt=\left(\frac{1-K}{2}+\mathcal{O}(\epsilon)\right)\int_{t_0}^{t_*}\left(\frac{t}{t_0}\right)^{-\frac{\alpha}{2}(1-3K)+\mathcal{O}(\epsilon)}t^{-\alpha}dt\\
            &\gtrsim \int_{t_0}^{t_*}\left(\frac{t}{t_0}\right)^{-1-c_2\epsilon}dt=\frac{1-\left(\frac{t_*}{t_0}\right)^{-c_2\epsilon}}{c_2\epsilon},
        \end{align*}
        provided $c_2$ is sufficiently large. Hence, we find that
        \begin{equation*}
            K_2(t_*)^{-1}+K_0(t_*)\lesssim \frac{c_2\epsilon}{1-\left(\frac{t_*}{t_0}\right)^{-c_2\epsilon}}+\left(\frac{t_*}{t_0}\right)^{c_2\epsilon}-1.
        \end{equation*}
        Solving the condition 
        \begin{equation*}
            c_3\delta = \frac{c_2\epsilon}{1-\left(\frac{t_*}{t_0}\right)^{-c_2\epsilon}}+\left(\frac{t_*}{t_0}\right)^{c_2\epsilon}-1,
        \end{equation*}
        we find that
        \begin{equation*}
            t_*=  t_0\left(\frac{2+c_3\delta-c_2\epsilon\pm\sqrt{(c_3\delta-c_2\epsilon)^2-4c_2\epsilon}}{2}\right)^{\frac{1}{c_2\epsilon}}.
        \end{equation*}
        We see that for the discriminant to be positive for small $\epsilon$, we need that $\delta(\epsilon) \geq \sqrt{\bar{C}^{\prime}\epsilon}$ with $\bar{C}^{\prime}>0$ sufficiently large. We pick the smaller root, as this gives us the earliest at which \eqref{eq:cond} is satisfied. Therefore, we find an upper bound given by
        \begin{equation*}
            t_{*}\lesssim \left(1+\delta\right)^{\frac{\tilde{c}_2}{\delta^{2}}}\simeq e^{\frac{\tilde{c}_2}{\delta^{2}}\log(1+\delta)}\simeq e^{\frac{\tilde{c}_2}{\delta}}. 
        \end{equation*}
        Now, again, consider the characteristic $\varrho$ with $\varrho(t_0)=x$ with $x\in S^{1}$ arbitrary. Changing the direction of the estimates, but now considering $-K_0$, we find that 
        \begin{equation*}
            -K_0(t_*)\geq c_4 \left(1-\left(\frac{t_*}{t_0}\right)^{c_2\epsilon}\right).
        \end{equation*}
        Furthermore, we have 
        \begin{equation*}
            K_2(t_*)\lesssim \frac{\left(\frac{t_*}{t_0}\right)^{c_2\epsilon}-1}{c_2\epsilon} 
        \end{equation*}
        and hence,
        \begin{equation*}
            K_2(t_*)^{-1}-K_0(t_*)\geq c_5\frac{c_2\epsilon}{\left(\frac{t_*}{t_0}\right)^{c_2\epsilon}-1} + c_4 \left(1-\left(\frac{t_*}{t_0}\right)^{c_2\epsilon}\right).
        \end{equation*}
        This leads to the bound 
        \begin{equation*}
            t_*\geq  t_0\left(1+\frac{2c_2 c_5\epsilon}{\delta+\sqrt{\delta^{2}+4c_2c_4c_5\epsilon}}\right)^{\frac{1}{c_2\epsilon}}.
        \end{equation*}
        Using the fact that $\delta =\mathcal{O}(\sqrt{\epsilon})$, via the same arguments as above, we find the upper bound
        \begin{equation*}
            t_*\gtrsim  e^{\frac{c_6}{\delta}}. 
        \end{equation*}
        
        \textbf{Construction of the initial data.} What is left to show, is that there exists a sequence of initial data satisfying \eqref{eq:data} while going to $0$ in the desired topology. To achieve that, suppose we are given a function $f\in C^{\infty}(\mathbb{R})$ with $f=0$ outside $(-\kappa,\kappa)$, where $\kappa>0$ and with the properties
        \begin{equation*}
            \|f\|_{L^{\infty}}=\| f^{\prime}\|_{L^{\infty}}=\| f^{\prime\prime}\|_{L^{\infty}}=1.
        \end{equation*}
        Furthermore, we assume that there exists $x_\text{min}$ such that $f^{\prime}(x_\text{min})=-1$, i.e., it attains its minimum  attains at $x_\text{min}$.
        
       For $\gamma<0$ and $x_0\in S^1$, we define the net $(f^{\text{sub}}_\epsilon)\subset C^{\infty}(S^{1})$ 
        \begin{equation*}
            f^{\text{sub}}_\epsilon(x)\coloneqq \epsilon f\left( \frac{x-x_0}{\beta} \right),
        \end{equation*}
        where we choose $\beta>0$ sufficiently small so that $\beta^{-1} \geq \bar{C}$, where $\bar{C}$ is the same constant as defined after \eqref{eq:subupper}. Now choosing $(\mathring{R}_+)_n=f_{\frac{1}{n}}^{\text{sub}}$ satisfies \eqref{eq:data} and, in addition,
        \begin{equation*}
            \lim_{n\to \infty} \|\mathring{R}_n\|_{H^{s}}=0
        \end{equation*}
        for all $s\geq 2$. 
        
        Now let $\gamma=0$. We define the net $(f_{\epsilon}^{\text{crit}})\subset C^{\infty}(S^{1})$ via
        \begin{equation*}
            f_{\epsilon}^{\text{crit}}(x)\coloneqq \epsilon f\left( \frac{x-x_0}{\beta \epsilon^{1-p}} \right),
        \end{equation*}
        where, again, $\beta$ is a small constant. Note that, for all $\epsilon\in (0,1)$,
        \begin{equation*}
            \|f_{\epsilon}^{\text{crit}}\|_{L^{\infty}}=\epsilon, \qquad \|(f_{\epsilon}^{\text{crit}})^{\prime}\|_{L^{\infty}}=\beta^{-1}\epsilon^{p}, \qquad \|(f_{\epsilon}^{\text{crit}})^{\prime\prime}\|_{L^{\infty}}=\beta^{-2}\epsilon^{-1+2p}.
        \end{equation*}
        Furthermore, we note that
        \begin{align*}
            \|f_{\epsilon}^{\text{crit}}\|_{L^{2}(S^{1})}&=\beta^{\frac{1}{2}}\epsilon^{1+\frac{1-p}{2}} \|f\|_{L^{2}(\mathbb{R})}, \\
            \|f_{\epsilon}^{\text{crit}}\|_{\dot{H}^{1}(S^{1})}&=\beta^{-\frac{1}{2}}\epsilon^{p+\frac{1-p}{2}}\|f^{\prime}\|_{L^{2}(\mathbb{R})}, \\
            \|f_{\epsilon}^{\text{crit}}\|_{\dot{H}^{2}(S^{1})}&=\beta^{-\frac{3}{2}}\epsilon^{-1+2p+\frac{1-p}{2}}\|f^{\prime\prime}\|_{L^{2}(\mathbb{R})}.
        \end{align*}
        From the last norm, we get the condition that
        \begin{equation*}
            -1+2p+\frac{1-p}{2}>0 \quad \iff \quad p>\frac{1}{3}.
        \end{equation*}
        Hence, for $\gamma=0$, choosing $p>\frac{1}{3}$, and in particular $p=\frac{1}{2}$ yields the desired result, as $(\mathring{R}_+)_n=f_{\frac{1}{n}}^{\text{sub}}$ satisfies \eqref{eq:data} and
        \begin{equation*}
            \lim_{n\to \infty } \|(\mathring{R}_+)_n\|_{H^{2}}=0.
        \end{equation*}
    
    \end{proof}

    \section{Proof of Theorem \ref{energyest}}\label{sec:thm2}

    In this section we discuss how a bound for the minimal time of existence can be obtained as a corollary of the results \cite{fajman2025arxiv} via energy methods. 

    \begin{remark}
        For the sake of coherence, we will state the contents of this chapter for plane-symmetric solutions. Note, however, that the argument adapts verbatim to the $3+1$-dimensional non-symmetric case, the only difference being the regularity of the data and hence the solution. Inspecting \cite[Lemma 4.17]{fajman2025arxiv}, we see that the only difference in the evolution of the energy compared to the plane-symmetric case is the appearance of another negative definite term featuring the curl of $v$.
    \end{remark}

\begin{lem}\label{lem:energyid}
    Suppose $s>\frac{3}{2}$ and $U=(L,v)$ solves the \eqref{system} on the time interval $[t_{0},t_{1})$. Then, there exists a non-negative functional $E[\cdot,\cdot]$ that satisfies the inequality
    \begin{equation}\label{eq:energyid}
        \frac{d}{dt}E[L,v]\leq -t^{-1}\alpha (1-3K)E[L,v]+Ct^{-2+\alpha}E[L,v]+\left(t^{-1}+t^{-\alpha }\right)\beta(E[L,v]),
    \end{equation}
    where  $|\beta(x)|=\mathcal{O}(x^{\frac{3}{2}})$, provided $x\leq 1$, and $C>0$. In addition, there exists a constant $\epsilon_{0}>0$, such that, given that $\frac{1}{2}(\|\partial f\|_{H^{s-1}}^{2}+\|g\|_{H^{s}}^{2})\leq \epsilon$ for some $0<\epsilon\leq \epsilon_{0}$, 
    \begin{equation*}
        E[f,g] \simeq \|\partial f\|_{H^{s-1}}^{2}+\| g\|_{H^{s}}^{2}.
    \end{equation*}
    We define the semi-norm $\|\cdot\|_{\tilde{H}^{s}}$ via $\|(f,g)\|_{\tilde{H}^{s}}^{2}=\|\partial f\|_{H^{s-1}}^{2}+\|g\|_{H^{s}}^{2}$. 
\end{lem}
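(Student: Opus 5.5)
The plan is to build $E$ as a symmetrized $H^s$-type energy for the symmetric hyperbolic form of \eqref{system}, following \cite{fajman2025arxiv}. First I would symmetrize: find a positive definite matrix $A^0(U)$, smooth for small $v$, such that $A^0 A$ is symmetric. Multiplying \eqref{system} by $A^0$ gives
\begin{equation*}
A^0\partial_t U + t^{-\alpha}A^0A\,\partial_x U = t^{-1}A^0 H .
\end{equation*}
Because $L$ enters $A$ and $H$ only through its derivatives (by \eqref{AH}, $A$ and $H$ depend on $v$ alone), the system is invariant under $L\mapsto L+c$. So only $\partial_x L$ and $v$ need to be controlled. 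This is why the semi-norm $\tilde H^s$ appears.

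I would then set
\begin{equation*}
E[L,v]=\sum_{k\le s}\langle \partial^k W, A^0(v)\,\partial^k W\rangle_{L^2},
\end{equation*}
where $W=(\partial_x L,v)$ is differentiated appropriately, i.e.\ $\partial^{k}$ acts on $L$ only for $k\ge1$. I would add to this a suitably weighted lower-order correction, so that the linearized source produces the exact damping. The equivalence $E\simeq\|\partial f\|_{H^{s-1}}^2+\|g\|_{H^s}^2$ under smallness follows from $A^0(v)=A^0(0)+\mathcal O(|v|)$, Sobolev embedding $H^s\subset L^\infty$ for $s>\tfrac12$ in one dimension, and positivity of $A^0(0)$.

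Next I would differentiate in time. Applying $\partial^k$ to the symmetrized system and pairing with $A^0\partial^k W$ produces three kinds of terms. The first is the transport term. Integration by parts on $S^1$ turns it into $\tfrac12 t^{-\alpha}\langle \partial^kW,\partial_x(A^0A)\partial^kW\rangle$, plus commutators bounded by Moser estimates through $\|\partial_x U\|_{L^\infty}\,\|W\|_{H^s}^2\lesssim E^{3/2}$. Together these give the $t^{-\alpha}\beta(E)$ contribution. The second is $\partial_t A^0$, which via the equation is again $\mathcal O(t^{-\alpha}E^{1/2}+t^{-1}E^{1/2})$ times $E$. The third is the source. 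Linearizing, $H=(0,-\alpha(1-3K)v)+\mathcal O(|v|^2)$. The quadratic remainders give $t^{-1}\beta(E)$, and the linear part damps $v$ at rate $\alpha(1-3K)t^{-1}$.

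The main obstacle is the $L$-component. The source does not damp $\partial_xL$ directly, so the damping $-t^{-1}\alpha(1-3K)E$ for the full energy must come from the coupling. The linear system in $(\partial_xL,v)$ has off-diagonal terms $t^{-\alpha}$ times constant. I would handle this as \cite{fajman2025arxiv} does, by choosing the weights or a cross term $\langle\partial^{k}\partial_x L,\partial^{k} v\rangle$ with a time-dependent coefficient so that the leading linear part yields exactly $-\alpha(1-3K)t^{-1}E$. The error from differentiating the time-dependent weights is of size $t^{-2+\alpha}E$, which I would absorb into the $Ct^{-2+\alpha}E$ term. If this bookkeeping fails, I would instead cite \cite[Lemma 4.17]{fajman2025arxiv} for the energy identity, restrict it to the plane-symmetric setting, and drop the nonpositive curl term.
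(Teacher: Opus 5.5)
Your mechanism is the right one: a symmetrizer, the semi-norm forced by the invariance $L\mapsto L+c$, commutator/Moser bounds for the $(t^{-1}+t^{-\alpha})\beta(E)$ terms, and a cross term with coefficient $\propto t^{\alpha-1}$ whose time derivative produces the $Ct^{-2+\alpha}E$ error. Your fallback is exactly the paper's proof, which only cites Lemmas 4.1, 4.17 and 4.19 of \cite{fajman2025arxiv} (and \cite{fajman2025cqg} for plane symmetry).

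The explicit energy you write down, however, does not satisfy \eqref{eq:energyid}, and the failure is at level zero. There you keep $\|v\|_{L^2}^2$ but drop the $L$-slot, which breaks the symmetric cancellation. At the linear level
\begin{equation*}
\frac{d}{dt}\|v\|_{L^2}^2=-\frac{2K}{1+K}\,t^{-\alpha}\langle v,\partial_xL\rangle-2\alpha(1-3K)\,t^{-1}\|v\|_{L^2}^2 .
\end{equation*}
The coupling term has size $t^{-\alpha}E$: it is linear in $E$, not $\beta(E)$, and $t^{-\alpha}\gg t^{-1}\gg t^{-2+\alpha}$. So neither the damping nor $Ct^{-2+\alpha}E$ can absorb it. Your level-one cross term $\langle\partial_xL,v\rangle$ does not remove it either, since at the linear level its derivative only produces $t^{-\alpha}\|\partial_xL\|_{L^2}^2$, $t^{-\alpha}\|\partial_xv\|_{L^2}^2$ and $t^{-1}\langle\partial_xL,v\rangle$. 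If you instead meant $\partial^k$ applied to $W=(\partial_xL,v)$ for all $k\le s$, things are worse: $L$ carries one more derivative than $v$, the top-order transport terms no longer cancel, and $s+1$ derivatives of $L$ appear.

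A clean repair is to use $|\bar v|^2$, the squared spatial mean of $v$, at level zero. Since $\int_{S^1}\partial_xL\,dx=0$ and $\int_{S^1}A_{22}(v)\partial_xv\,dx=0$, one gets $\frac{d}{dt}\bar v=-\alpha(1-3K)t^{-1}\bar v+\mathcal{O}\big((t^{-1}+t^{-\alpha})E^{3/2}\big)$. Poincar\'e's inequality then restores $E\simeq\|(L,v)\|_{\tilde H^s}^2$.

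Second, the cross term has no freedom. Set $\mathcal{E}_j=\frac{K}{(1+K)^2}\|\partial^jL\|_{L^2}^2+\|\partial^jv\|_{L^2}^2+\theta(t)\langle\partial^jL,\partial^{j-1}v\rangle$ for $1\le j\le s$. This is your $\langle\partial^k\partial_xL,\partial^kv\rangle$ with $k=j-1\le s-1$; taking $k=s$ would cost $s+1$ derivatives of $L$. The linearized flow gives $\frac{d}{dt}\mathcal{E}_j=-\alpha(1-3K)t^{-1}\mathcal{E}_j+\theta'(t)\langle\partial^jL,\partial^{j-1}v\rangle$ exactly when $\theta=\frac{\alpha(1-3K)}{1+K}t^{\alpha-1}$. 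Any other multiple leaves a strictly smaller rate on $L$ or on $v$, which would change the exponent in Theorem \ref{energyest}.

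Because $\theta$ is forced, testing the lowest Fourier mode shows $\mathcal{E}_j$ is coercive only when $\alpha(1-3K)t^{\alpha-1}<2\sqrt K$. This can fail near $t_0=1$ for small $K$. Your equivalence argument, which only invokes positivity of $A^0(0)$, therefore needs a supplement. For example, switch the cross term on with a cutoff after some $T_0$, and absorb the resulting errors on $[t_0,T_0]$ into $Ct^{-2+\alpha}E$ with $C$ large.
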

    \begin{remark}
        The fact that $\|\cdot\|_{\tilde{H}^{s}}$ is only a semi-norm is not an issue. The reason is that the equations of motion \eqref{system} do not contain $L$ in its undifferentiated form. Hence, $L$ stays bounded as long as the sources of its evolution are bounded. This is accomplished globally in \cite{fajman2025arxiv} via a bootstrap. 
    \end{remark}

    \begin{proof}
        This Lemma is an corollary of \cite[Lemma 4.1]{fajman2025arxiv}, \cite[Lemma 4.17]{fajman2025arxiv}, and \cite[Lemma 4.19]{fajman2025arxiv}. For direct treatment of the plane-symmetric case, we refer to \cite{fajman2025cqg}.
    \end{proof}
    We will now use the energy inequality in \eqref{eq:energyid} to derive a lower bound on the lifespan of any given solution. 

\begin{proof}[Proof of Theorem \ref{energyest}]
    We focus on the subcritical case. The argument in the critical case is analogous. 
    
    Suppose that 
    \begin{equation*}
        \|U_{0}\|_{\tilde{H}^{s}}\leq \epsilon,
    \end{equation*}
    Now, we define the energy functional $\bar{E}=t^{\alpha (1-3K)}E$. Furthermore, we assume that $\epsilon_{0}$ is sufficiently small so that Lemma \ref{lem:energyid} applies, $E[U_{0}]<1$. We then have
    \begin{equation*}
        \bar{E}(t_0)=t_0^{\alpha(1-3K)}E(t_0)\leq c\epsilon^{2},
    \end{equation*}
    where $c>0$ is a constant. Without loss of generality, we redefine $\epsilon$ so that $\bar{E}(t_0)\leq \epsilon^2$. Inspecting \eqref{eq:energyid}, this can certainly always be achieved, since by \eqref{eq:energyid}, we find that
    \begin{equation*}
        \frac{d}{dt}\bar{E}\lesssim t^{-2+\alpha}\bar{E}+\left(t^{-1-\frac{1}{2}\alpha(1-3K)}+t^{-\frac{3}{2}\alpha (1-K)}\right)\bar{E}^{\frac{3}{2}}\lesssim t^{-2+\alpha}\bar{E}+ t^{-\frac{3}{2}\alpha (1-K)}\bar{E}^{\frac{3}{2}}.
    \end{equation*}
    Now, let $t_{*}>t_{0}$ be the first time at which 
    \begin{equation*}
        \bar{E}[U(t_*)]=A^2\epsilon^2,
    \end{equation*}
    where $A$ is a large constant to be chosen later on. Depending on $A$ we choose $\epsilon_0$ sufficiently small, such that Lemma \ref{lem:energyid} remains valid for $t\in[t_0,t_*]$. Then, necessarily, for $t\in[t_0,t_{*})$
    \begin{equation*}
        \frac{d}{dt}\bar{E}(t)\lesssim t^{-2+\alpha}\bar{E}+ t^{-\frac{3}{2}\alpha (1-K)}\bar{E}^{\frac{3}{2}}\Rightarrow -\frac{d}{dt}\left(\sqrt{\bar{E}}\right)^{-1}\leq B t^{-2+\alpha}\left(\sqrt{\bar{E}}\right)^{-1}+ Bt^{-\frac{3}{2}\alpha (1-K)}.
    \end{equation*}
    for some sufficiently large constant $B>0$. Setting $D(t)=\left(\sqrt{\bar{E}(t)}\right)^{-1}\exp(\int_{t_0}^{t}Bs^{-2+\alpha }ds)$ we find
    \begin{equation}\label{eq:Dineq}
        \frac{d}{dt}D(t)\geq  D(t)Bt^{-2+\alpha}-Bt^{-2+\alpha}D(t)-Bt^{-\frac{3}{2}\alpha(1-K)}e^{\int_{t_0}^{t}Bs^{-2+\alpha }ds}\geq -BCt^{-\frac{3}{2}\alpha(1-K)},
    \end{equation}
    where 
    \begin{equation*}
       C=\exp (B\int_{t_0}^{\infty}t^{-2+\alpha}dt)
   \end{equation*}
   Integrating the inequality \eqref{eq:Dineq} yields
   \begin{equation*}
       -\left(\sqrt{\bar{E}}\right)^{-1}(t_*)\exp\left(B\int_{t_0}^{t_*}t^{-2+\alpha}dt\right)+\left(\sqrt{\bar{E}}\right)^{-1}(t_0)\leq BC\frac{t_*^{1-\frac{3}{2}\alpha (1-K)}-t_0^{1-\frac{3}{2}\alpha (1-K)}}{1-\frac{3}{2}\alpha(1-K)},
   \end{equation*}
    Considering that $r=1-\frac{3}{2}\alpha (1-K)>0$ as well as $-2+\alpha<-1$, it follows that
    \begin{align*}
       t_*^{1-\frac{3}{2}\alpha (1-K)}&\geq-r\left(BC\sqrt{\bar{E}}\right)^{-1}(t_*)\exp\left(B\int_{t_0}^{t_*}t^{-2+\alpha}dt\right)+r\left(BC\sqrt{\bar{E}}\right)^{-1}(t_0) +t_0^{1-\frac{3}{2}\alpha (1-K)}\\
       &\geq -\frac{r}{ABC\epsilon}\exp\left(B\int_{t_0}^{t_*}t^{-2+\alpha}dt\right)+\frac{r}{BC\epsilon}. 
   \end{align*}
   Choosing
   \begin{equation*}
       A>\exp (B\int_{t_0}^{\infty}t^{-2+\alpha}dt)
   \end{equation*}
   yields $t_*^{1-\frac{3}{2}\alpha (1-K)}\gtrsim \frac{1}{\epsilon}$. Alternatively, the threshold is never reached and the solution is global via the continuation principle and the fact that $L$ remains bounded. 
\end{proof}

    \clearpage
    
    \appendix 

    \section{ODE theory}

    In this section we state an prove two lemmata concerning ODE theory. These are versions of standard results that can be found, e.g., in \cite{hoermander1997}, adapted for the specific purposes of this article.  
    
    \begin{lem}[ODE Blowup Lemma]\label{lem:blowup}
		Let $ A_{0,1,2} \in C([t_{0},t_{*}]) $ and $ A_{2}(t)>0 $ for all $ t\in[t_{0},t_{*}] $. Let Furthermore
		\begin{equation*}
			K_{2}(t^\prime)=\int_{t_{0}}^{t^\prime}e^{K_{1}(t)}A_{2}(t)dt, \quad K_{1}(t^\prime)=\int_{t_{0}}^{t^\prime}A_{1}(t)dt,\quad K_{0}(t^\prime)=\int_{t_{0}}^{t^\prime}e^{-K_{1}(t)}|A_{0}(t)|dt.
		\end{equation*}
		If 
        \begin{equation}\label{eq:condinit}
            v(t_0)=v_0 \qquad v_0>K_0(t_*),
        \end{equation}
        and
		\begin{equation}\label{eq:cond}
			v_{0}\geq (K_{2}(t_*)^{-1}+K_{0}(t_*))
		\end{equation}
		the ODE 
		\begin{equation*}
			\frac{d}{dt}v(t)=A_{2}(t)v(t)^{2}+A_{1}(t)v(t)+A_{0}(t)
		\end{equation*}
		does not have a bounded solution on $[t_{0},t_{*})$.
	\end{lem}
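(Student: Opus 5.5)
We prove Lemma \ref{lem:blowup} by removing the linear term with an integrating factor and then comparing with an explicitly solvable Riccati equation without zeroth-order term. Suppose, for contradiction, that $v$ is a solution that stays bounded on $[t_0,t_*)$. Set
\[
w(t)=e^{-K_1(t)}v(t).
\]
A direct computation gives
\[
w'=e^{K_1}A_2 w^2+e^{-K_1}A_0 \geq e^{K_1}A_2 w^2-e^{-K_1}|A_0|.
\]
The $A_0$-contribution can be absorbed into a shift. Define
\[
z(t)=w(t)+K_0(t),
\]
where $K_0$ is nondecreasing with $K_0(t_0)=0$. Then $z(t_0)=v_0$ and
\[
z'=w'+e^{-K_1}|A_0|\geq e^{K_1}A_2 w^2.
\]

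Step 1 is to keep $w$ positive and bounded below by $z-K_0(t_*)$. While $z(t)>K_0(t_*)$ we have $w=z-K_0(t)\geq z-K_0(t_*)>0$. Since $z'\ge0$ as long as everything is defined, $z$ is nondecreasing. Combined with $z(t_0)=v_0>K_0(t_*)$ from \eqref{eq:condinit}, a continuity argument shows $z(t)\ge v_0>K_0(t_*)$ on the whole interval of existence. Hence
\[
w\geq z-K_0(t_*)\geq v_0-K_0(t_*)>0.
\]
Here we use only $A_2>0$. In particular $w\geq \tilde z := z-K_0(t_*)>0$, so $w^2\geq\tilde z^2$ and therefore
\[
\tilde z'\geq e^{K_1}A_2\tilde z^2 .
\]

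Step 2 is the comparison. Dividing by $\tilde z^2$ gives $-\frac{d}{dt}\tilde z^{-1}\geq e^{K_1}A_2$. Integrating from $t_0$ to $t$ yields
\[
\tilde z(t)^{-1}\leq (v_0-K_0(t_*))^{-1}-K_2(t).
\]
By \eqref{eq:cond}, $(v_0-K_0(t_*))^{-1}\leq K_2(t_*)$, so the right-hand side tends to a nonpositive value as $t\to t_*$. Since $\tilde z>0$ on $[t_0,t_*)$, we get $\tilde z(t)\to\infty$ as $t\to t_*$, or earlier. Because $v=e^{K_1}w\geq e^{K_1}\tilde z$ and $K_1$ is continuous on the compact interval $[t_0,t_*]$, $v$ is unbounded, contradicting the assumption.

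The main obstacle is the degenerate equality case in \eqref{eq:cond}. There the bound only says $\tilde z^{-1}\leq K_2(t_*)-K_2(t)\to0$. This still forces $\tilde z\to\infty$ as $t\uparrow t_*$, because the right-hand side is positive for $t<t_*$ and tends to $0$, so the argument survives. The remaining care is the bootstrap in Step 1: positivity of $w$ must be established before the quadratic lower bound $w^2\ge\tilde z^2$ is used. This is handled by letting $T$ be the first time $\tilde z$ drops to $0$ and observing that monotonicity of $z$ on $[t_0,T]$ gives $\tilde z(T)\geq v_0-K_0(t_*)>0$, a contradiction.
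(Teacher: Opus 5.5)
Your proof is correct and follows the same strategy as the paper: the integrating factor $e^{-K_1}$ removes the linear term, a shift by $K_0$ absorbs $A_0$, and what remains is a pure Riccati inequality whose blow-up is governed by $K_2$. The only difference is in execution: the paper solves the auxiliary ODE $W'=e^{K_1}A_2\,(W-K_0(t_*))^2$ and uses a comparison principle (plus an extension argument for $W$) to get $W-K_0\le e^{-K_1}v$, whereas you integrate the inequality for $\tilde z^{-1}$ directly, which gives exactly the same bound more economically; your Step~1 bootstrap is also unnecessary, since $z'\ge e^{K_1}A_2w^2\ge 0$ holds whatever the sign of $w$.
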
 
    \begin{proof}
        Assume that we have a bounded solution $v$ on the interval $[t_0,t_*)$. We define the function $V$ via 
        \begin{equation*}
            V(t)\coloneqq e^{-K_1(t)}v(t). 
        \end{equation*}
        Then
        \begin{equation}\label{eq:odeV}
        \begin{aligned}
            \frac{d}{dt}V(t)&=-A_1(t)V(t)+e^{-K_1(t)}\left(A_{2}(t)v(t)^{2}+A_{1}(t)v(t)+A_{0}(t)\right)\\
            &=\underbrace{e^{K_{1}(t)}A_{2}(t)}_{\eqqcolon \tilde{A}_2(t)}V(t)^2+\underbrace{e^{-K_{1}(t)}A_{0}(t)}_{\eqqcolon \tilde{A}_0(t)},
        \end{aligned}
        \end{equation}
        and $V(t_0)=v_0$. Now consider the auxiliary ODE
        \begin{equation}\label{eq:auxode}
            \frac{d}{dt}W(t)=\tilde{A}_2(t)\left(W(t)-K_0(t_*)\right)^2, \qquad W(t_0)=v_0.
        \end{equation}
        A priori, we do not know whether a solution $W$ of \eqref{eq:auxode} exists on $[t_0,t_*)$. Certainly, $W$ exists locally on some maximal time interval $[t_0,t_1)$, where $t_0<t_1\leq t_*$. Then for $t\in[t_0,t_1)$, solving \eqref{eq:auxode} gives
        \begin{equation}\label{eq:locestW}
            (W(t)-K_0(t_*))^{-1}-(v_0-K_0(t_*))^{-1}=-K_2(t).
        \end{equation}
        Using \eqref{eq:condinit} and the fact that $K_2(t)$ is increasing for all $t\geq t_0$, we infer that $W$ is increasing. 
        
        Now, for $t\in[t_0,t_1)$, consider
        \begin{equation}\label{eq:auxest}
            \frac{d}{dt}\left(W(t)-K_0(t)\right)=\tilde{A}_2(t)\left(W(t)-K_0(t_*)\right)^2-|\tilde{A}_0(t)|\leq \tilde{A}_2(t)\left(W(t)-K_0(t)\right)^2+\tilde{A}_0(t),
        \end{equation}
        where we have used the fact that $K_0$ is non-negative and increasing, $W(t_0)=v_0>K_0(t_*)$, and $W$ is increasing. Comparing \eqref{eq:odeV} and \eqref{eq:auxest} and acknowledging the fact that $(W-K_0)(t_0)=v_0=V(t_0)$, we find that for all $t\in [t_0,t_1)$, $W(t)-K_0(t)\leq V(t)$. Since, by assumption, $V$ is bounded on $[t_0,t_*)$ and so is $K_0$, $W$ must be bounded as well. Hence, the solution $W$ extends to $t_1=t_*$. But then, taking the limit $t\to t_1=t_*$ of \eqref{eq:locestW}, and using the fact that $W$ is bounded on $[t_0,t_*)$, we see that
        \begin{equation}\label{eq:boundcontra}
            K_2(t_1)<  (v_0-K_0(t_*))^{-1},
        \end{equation}
        which contradicts \eqref{eq:cond}.
    \end{proof}
    \begin{lem}\label{lem:riccatiexist}
        Let $ A_{0,1,2} \in C([t_{0},t_{*}]) $ and $ A_{2}(t)>0 $ for all $ t\in[t_{0},t_{*}] $. Furthermore, let $K_0$, $K_1$ and $K_2$ be defined as in Lemma \ref{lem:blowup}. 
        
		If $v_0\in \mathbb{R}_+$ satisfies
		\begin{equation}\label{eq:condexist}
			v_{0} <  (K_{2}(t_*)^{-1}-K_{0}(t_*))
		\end{equation}
		the ODE 
		\begin{equation*}
			\frac{d}{dt}v(t)=A_{2}(t)v(t)^{2}+A_{1}(t)v(t)+A_{0}(t)
		\end{equation*}
		has a solution solution on $[t_{0},t_{*}]$ with $v(t_0)=v_0$.
    \end{lem}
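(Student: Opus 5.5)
The plan is to mirror the proof of Lemma \ref{lem:blowup}, with the inequalities reversed, so that the solution is bounded from above and below by explicit Riccati comparison functions. First we pass to $V(t)=e^{-K_1(t)}v(t)$. As in \eqref{eq:odeV}, $V$ satisfies
\begin{equation*}
\frac{d}{dt}V=\tilde{A}_2V^2+\tilde{A}_0,\qquad V(t_0)=v_0,
\end{equation*}
with $\tilde{A}_2=e^{K_1}A_2>0$ and $\tilde{A}_0=e^{-K_1}A_0$. Because $K_1$ is bounded on the compact interval $[t_0,t_*]$, $v$ is bounded exactly when $V$ is. By standard ODE theory (Picard--Lindel\"of plus the continuation criterion), $V$ exists on a maximal interval $[t_0,t_1)$ and, if $t_1\le t_*$, it becomes unbounded as $t\to t_1$. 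So it suffices to produce two-sided a priori bounds on $V$ on $[t_0,t_1)\cap[t_0,t_*]$.

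\emph{Upper bound.} Consider the comparison function $W$ solving
\begin{equation*}
\frac{d}{dt}W=\tilde{A}_2 W^2,\qquad W(t_0)=v_0+K_0(t_*).
\end{equation*}
Explicitly, $W(t)=\big((v_0+K_0(t_*))^{-1}-K_2(t)\big)^{-1}$. By \eqref{eq:condexist} we have $v_0+K_0(t_*)<K_2(t_*)^{-1}$. Since $v_0>0$ and $K_0\ge0$, the initial value is positive, and since $K_2$ is increasing, the denominator stays positive on $[t_0,t_*]$. Hence $W$ is finite, positive and increasing there.

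Now set $Y(t)=W(t)-K_0(t_*)+K_0(t)$. Then $Y(t_0)=v_0$. Because $K_0(t)\le K_0(t_*)$ we have $Y\le W$, and because $W\ge W(t_0)\ge K_0(t_*)$ we have $Y\ge0$; together these give $Y^2\le W^2$. Therefore
\begin{equation*}
Y'=\tilde{A}_2W^2+|\tilde{A}_0|\ge \tilde{A}_2Y^2+\tilde{A}_0 .
\end{equation*}
So $Y$ is a supersolution of the $V$-equation. This comparison needs care: the right-hand side $\tilde{A}_2 y^2+\tilde{A}_0$ is only locally Lipschitz in $y$. That is harmless, because both $V$ and $Y$ are continuous on compact subintervals, so the standard comparison theorem applies and gives $V\le Y\le W$.

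\emph{Lower bound.} Symmetrically, let $Z(t)=-K_0(t)$. Then $Z'=-|\tilde{A}_0|\le\tilde{A}_0\le \tilde{A}_2Z^2+\tilde{A}_0$, so $Z$ is a subsolution with $Z(t_0)=0<v_0$. Comparison gives $V\ge -K_0(t)\ge -K_0(t_*)$.

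With these bounds, $|V|$ stays bounded on $[t_0,\min(t_1,t_*))$, which forces $t_1>t_*$. Hence the solution exists on the closed interval $[t_0,t_*]$, and $v=e^{K_1}V$ is the desired solution.

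The main obstacle is checking that $Y$ is genuinely a supersolution. This depends on $Y\ge0$, so that $Y\le W$ implies $Y^2\le W^2$, and that in turn uses $v_0>0$ together with $K_0(t_*)\ge K_0(t)$. The strictness of \eqref{eq:condexist} is exactly what keeps $W$ finite up to and including $t_*$.
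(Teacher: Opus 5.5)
Your proposal is correct and follows the paper's proof essentially verbatim. Your $W$ is the paper's auxiliary function (which solves $W'=\tilde{A}_2(W+K_0(t_*))^2$, $W(t_0)=v_0$) shifted by $K_0(t_*)$, so your supersolution $Y=W-K_0(t_*)+K_0(t)$ is exactly the paper's $W+K_0(t)$. Your lower bound via the subsolution $-K_0(t)$ plays the same role as the paper's direct integration of $V'\ge-|\tilde{A}_0|$, which gives the slightly sharper $v_0-K_0(t)$. You also make explicit the nonnegativity $Y\ge 0$ needed for $Y^2\le W^2$, which the paper uses only implicitly.
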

    \begin{proof}
        Adopting the notation of the proof of Lemma \ref{lem:blowup}, we now consider the auxiliary ODE
        \begin{equation}\label{eq:auxode2}
            \frac{d}{dt}W(t)=\tilde{A}_2(t)\left(W(t)+K_0(t_*)\right)^2, \qquad W(t_0)=v_0.
        \end{equation}
        Solving \eqref{eq:auxode2} locally gives 
        \begin{equation*}
            \left(W(t)+K_0(t_*)\right)^{-1}=\left(v_0+K_0(t_*)\right)^{-1}-K_2(t).
        \end{equation*}
        By \eqref{eq:condexist} and the fact that $W$ is increasing, we see that the right-hand side is bounded from uniformly below for all $t\in[t_0,t_*]$ and hence, solution can be extended up until $t_*$. We calculate 
        \begin{equation}\label{eq:Westbelow}
            \frac{d}{dt}\left(W(t)+K_0(t)\right)=\tilde{A}_2(t)\left(W(t)+K_0(t_*)\right)^2+|\tilde{A}_0(t)|\geq \tilde{A}_2(t)\left(W(t)+K_0(t)\right)^2+\tilde{A}_0(t).
        \end{equation}
        Now, comparing \eqref{eq:Westbelow} with \eqref{eq:odeV}, and using the fact that $(W+K_0)(t_0)=v_0=V(t_0)$, we conclude that $V(t)\leq W(t)+K_0(t)\leq W(t)+K_0(t_*)$ as long as $V$ exists and $t\leq t_*$. Considering that
        \begin{equation*}
            \frac{d}{dt}V(t)\geq -|\tilde{A}_0(t)|, 
        \end{equation*}
        we gain the lower bound $V(t)\geq v_0-K_0(t)$. Hence, $V$ can be extended up until $t_*$. 
    \end{proof}

    \clearpage

    \printbibliography
    
\end{document}